\documentclass[11pt]{amsart}

\usepackage[all]{xy}
\usepackage{amssymb,array,xspace}
\usepackage{graphicx}
\usepackage{hyperref}

\newcommand*{\C}{{\mathbb C}}
\newcommand*{\Oo}{{\mathcal O}}
\newcommand*{\CP}{{\mathbb {CP}}}
\newcommand*{\R}{{\mathbb R}}
\newcommand*{\Z}{{\mathbb Z}}
\newcommand*{\uPSL}{{\widetilde{\PSL_2(\R)}}}

\DeclareMathOperator{\Aut}{Aut}
\DeclareMathOperator{\Ext}{Ext}
\DeclareMathOperator{\GL}{GL}
\DeclareMathOperator{\Hom}{Hom}

\DeclareMathOperator{\Int}{Int}
\DeclareMathOperator{\Ker}{Ker}
\DeclareMathOperator{\Mat}{Mat}
\DeclareMathOperator{\pr}{pr}
\DeclareMathOperator{\PSL}{PSL}
\DeclareMathOperator{\SL}{SL}
\DeclareMathOperator{\SU}{SU}
\DeclareMathOperator{\Tot}{Tot}

\def\Im{\mathop{\mathrm{Im}}\nolimits}

\let\eps\varepsilon
\let\ph\varphi

\newtheorem{proposition}{Proposition}[section]

\newtheorem{conj}[proposition]{Conjecture}

\newtheorem{lemma}[proposition]{Lemma}

\newtheorem{corollary}[proposition]{Corollary}

\theoremstyle{remark}
\newtheorem{remark}[proposition]{Remark}

\theoremstyle{definition}

\author{Serge Lvovski}

\address{National Research University Higher School of Economics\\
  NRC ``Kurchatov Institute''--SRISA}
\email{lvovski@gmail.com}

\thanks{The study has been funded within the framework of the HSE
  University Basic Research Program (HSE-BR-2025-061) and by NRC
  ``Kurchatov Institute''--SRISA according to the project FNEF-2024-0001}

\title{On holomorphic submersions with biholomorphic fibers}

\begin{document}

\keywords{Complex manifold, pseudoconcavity, flat fiber bundle, Euler class}

\subjclass{32L05, 55R40}

\begin{abstract}
We give explicit examples of holomorphic submersions $p\colon Y\to
X$ such that $Y$ and $X$ are complex manifolds, all the fibers
of~$p$ are biholomorphic to the unit disc in the complex plane, but the
mapping~$p$ is not holomorphically locally trivial.

In our examples the complex manifolds~$Y$ are neighborhoods of the
zero section of a line bundle~$L$ on~$X$; the line bundle~$L$ is
not, in general, supposed to be positive or negative.
\end{abstract}

\maketitle

\section{Introduction}

Suppose that $p\colon Y\to X$ is a submersive surjective holomorphic
mapping of complex manifolds such that all the fibers of~$p$ are
biholomorphic to the same complex manifold~$F$.  Is it true that $p$
is holomorphically locally trivial, that is, for any $x\in X$
there exists a neighborhood $V\ni x$ and a biholomorphism $\ph\colon
p^{-1}(V)\to F\times V$ such that
$\pr_V\circ\ph=p|_{p^{-1}(V)}$?

If $p$ is proper, the answer to this question is yes, as was shown by
W.~Fischer and H.~Grauert in~\cite{FischerGrauert}. For non-proper
mappings there exist partial positive results. For example, a theorem
by Nishino (\cite{Nishino}, see also~\cite{Ohsawa,Chirka}) asserts
that if $\dim Y=2$, $Y$ is Stein, $X$ is the unit disc $D\subset\C$,
and all the fibers of~$p$ are biholomorphic to~$\C$, then $p$ is
biholomorphic to the projection $\C\times D\to D$. Andreotti and
Vesentini~\cite{AV} showed that if all the fibers of a holomorphic
surjection $p\colon Y\to X$ are biholomorphic to each other and Stein,
then $p$ is ``almost holomorphically locally trivial'' in the
following sense: if $x\in X$ and $A\subset p^{-1}(x)$ is an open
subset with compact closure, then there exists an open subset
$\mathcal A\subset Y$ such that $\mathcal A\cap p^{-1}(x)=A $ and
$p|_{\mathcal A}\colon \mathcal A\to X$ is holomorphically locally
trivial (Anderotti and Vesentini say that such a mapping $p$ defines
``a locally pseudo-trivial deformation''). For the case ``dimension of
fibers of~$p$ equals~$1$'' this had been proved earlier by
Narasimhan~\cite{Narasimhan}.

If, however, one is interested in the genuine local triviality, then,
in the non-proper case, there exist many counterexamples, that is,
surjective holomorphic submersions $p\colon Y\to X$ such that all the
fibers of~$p$ are biholomorphic but $p$ is not locally trivial.

A quick and silly (counter)example may be produced if one does not
demand that the fibers of~$p$ be trivial. To wit, if $p'\colon Y'\to
X$ is a holomorphic covering with infinite fibers ($Y'$ and $X$ are
supposed to be connected), one may pick a point $a\in Y'$ and put
$Y=Y'\setminus\{a\}$, $p=p'|_Y$.  The mapping $p\colon Y\to X$ is a
surjective submersion and all its fibers, being countable discrete
spaces, are biholomorphic, but $p$ is not locally trivial, even
topologically.

There are examples of not locally trivial holomorphic submersions with
biholomorphic and connected fibers, too; most of them belong to
folklore. For instance, the natural projection $p\colon B\to
D$, where $B$ is the unit ball in~$\C^2$ and $D$ is the unit disc
in~$\C$, is not holomorphically locally trivial (see
Remark~\ref{remark} for one of several possible proofs).

The aim of this paper is to give a family of examples of holomorphic
submersions~$p\colon Y\to X$ such that all the fibers of~$p$ are
biholomorphic to the unit disk but $p$ is not holomorphically locally
trivial. All our examples will be neighborhoods of
the zero section in the total space of some complex line bundle~$L$
on~$X$ (needless to say, the natural projection of any such
neigborhood on~$X$ is a surjective submersion).

If the line bundle~$L$ is positive, one can introduce an Hermitian
metric on it and define the neighborhood in question as the set of
vectors in the fibers such that their norms are less that a
fixed~$\eps>0$; using pseudoconcavity, it is easy to show that the
resulting submersion is not locally trivial (see
Proposition~\ref{positive} and its proof in Section~\ref{sec:proofs}).

All the other examples are of the following form. We indicate
classes of pairs~$(X,L)$, where $X$ is a complex manifold and $L$ is a
line bundle on~$X$, such that if $U$ is a neighborhood of the zero
section of~$L$ in its total space such that the intersection of $U$ with any
fiber is biholomorphic to the disc in the complex plane, then the
projection $U\to X$ is not locally trivial (see
Propositions~\ref{prop:simply-connected}, \ref{Milnor_style},
and~\ref{tori}). In Propositions~\ref{prop:simply-connected}
and~\ref{tori} the bundle~$L$ does not need to be positive or negative.

The main idea of the proofs of these propositions is as follows. We
show that a holomorphically locally trivial bundle with
fiber~$D=\{z\in\C\colon |z|<1\}$ is a flat $\PSL_2(\R)$-bundle
(Corollary~\ref{corollary}), after which we use the results of
Milnor's old paper~\cite{Milnor}.  Precise statements will be given in
Section~\ref{sec:statements} below, after we have fixed notation and
terminology.

The paper is organized as follows. In Section~\ref{sec:statements} we
state the main results. In the following two sections we prove two
lemmas, and in Section~\ref{sec:proofs} we prove what is stated in
Section~\ref{sec:statements}. In the appendix several properties of
flat line bundles are gathered for which (properties) I did not manage
to find suitable references.

\subsection*{Acknowledgements}

I am grateful to Laszlo Lempert for valuable comments on the first
version of this text and for the example from Remark~\ref{remark}; I
am grateful to Stefan Nemirovski for useful discussions.

\subsection{Notation and conventions}

$D=\{z\in\C\colon |z|<1\}$ denotes the unit disc in the comlex plane,
and $H=\{z\in\C\colon \Im z>0\}$ stands for the upper half-plane.

By M\"obius transformations we mean linear fractional transformations
of the Riemann sphere~$\CP^1$.

The group of complex numbers of modulus~$1$ will be denoted
by~$\mathrm U(1)$. 

If $L$ is a line bundle over a manifold~$X$, then $\Tot_L$ is its
total space; if $x\in X$, then $L_x\subset\Tot_L$ is the fiber of~$L$
over~$x$. 

If $L$ is a complex line bundle on a topological space~$X$, then
$c_1(L)\in H^2(X,\Z)$ is its first Chern class and
$c_\R(L)\in H^2(X,\R)$ is the image of~$c_1(L)$ in~$H^2(X,\R)$.

If $p\colon Y\to X$ is a surjective submersion of smooth connected manifolds, then by \emph{relative tangent bundle} corresponding
to~$p$ we mean the vector bundle of rank $\dim Y-\dim X$ on~$Y$ such
that its fiber over a point $y\in Y$ is the tangent space $T_yF_y$,
where the manifold $F_y$ is the fiber of $p$ containing~$y$.

If $G$ is a topological group and $Y\to X$ is a fiber bundle with
structure group~$G$, one says that this bundle is \emph{flat} if it
admits a trivialization over an open cover~$\{V_\alpha\}$ of $X$ such
that the transition functions $g_{\alpha\beta}\colon V_\alpha\cap
V_\beta\to G$ are locally constant.  If $X$ is Hausdorff, path
connected, and locally simply connected, then a $G$-bundle is flat if
and ony if it is induced by a homomorphism $\pi_1(M)\to G$ (see
\cite[\S\,13]{Steenrod}). 

\section{Statements}\label{sec:statements}

We begin with an optimistic conjecture.

\begin{conj}\label{conj}
Suppose that $L$ is a holomorphic line bundle on a connected complex
manifold $X$; let $p\colon\Tot_L\to X$ be the natural projection.
Suppose that $U\subset \Tot_L$ is a neighborhood of the zero section of~$L$
such that, for any $x\in X$, the intersection $L_x\cap U$
is biholomorphic to the unit disc $D\subset\C$.

If $c_\R(L)\ne 0$, then the restriction
$p|_U\colon U\to X$ is never holomorphically locally trivial.
\end{conj}

Observe at once that one cannot drop the hypothesis $c_\R(L)\ne0$ in
this conjecture. 

Indeed, if $L$ is an arbitrary (topological) line bundle on a complex
manifold~$X$ such that~$c_\R(L)=0$, Proposition~\ref{4equiv} implies
that $L$ admits a trivialization over an open cover $X=\bigcup
V_\alpha$ such that the transition functions $g_{\alpha\beta}\colon
V_\alpha\cap V_\beta\to\C^*$ are locally constant of
modulus~$1$. Gluing the complex manifolds $V_\alpha\times \C$ by these
transition functions, one obtains a structure of a complex line bundle
on~$L$.

If $f_\alpha\colon \pi^{-1}(V_\alpha)\to V_\alpha\times\C$ are the
trivializations of the complex line bundle~$L$ to which the transition
functions~$g_{\alpha\beta}$ correspond (here, $\pi$ is the projection
to~$X$ from the total space of our bundle), then, putting
\begin{equation}\label{eq:tubular}
\|v\|=|\pr_2(f_\alpha(v))|\quad \text{for
  $v\in\pi^{-1}(V_\alpha)\subset \Tot_L$},
\end{equation}
one obtains a well-defined Hermitian norm on~$L$; the open set
$U=\{v\in \Tot_L\colon \|v\|<1\}$ is a neigborhood of the zero section
and the projection $\pi|_U\colon U\to X$ is a holomorphically locally
trivial bundle with fiber $D$.

Observe also that for any holomorphic line bundle $L$ (with an
Hermitian metric) on $X$
the open subset $U\subset\Tot_L$ defined by the
formula~\eqref{eq:t} below is such that $U\cap L_x$ is biholomorphic to
$D$ for any~$x$ and the projection $\pi|_U\colon U\to X$ is locally
trivial in the $C^\infty$ sense.

I do not have a proof for Conjecture~\ref{conj} in full generality.
Propositions~\ref{positive}, \ref{prop:simply-connected},
\ref{Milnor_style}, and~\ref{tori}, which are stated below, are
partial results; some of them are not, strictly speaking, consequences
of this conjecture.

First, one has a simple result for positive line bundles.

\begin{proposition}\label{positive}
Suppose that $L$ is a positive holomorphic line bundle on a compact
complex manifold~$X$.  Let $\|\cdot\|$ be an arbitrary Hermitian
metric on~$L$ regarded as a function on~$\Tot_L$, and let $\eps$ be an
arbitrary positive number.

If one puts
\begin{equation}\label{eq:t}
U=\{v\in\Tot_L\colon \|v\|<\eps\},  
\end{equation}
then all the fibers of the natural submersion $p\colon U\to X$ are
biholomorphic to the unit disk but $p$ is not
holomorphically locally trivial. 
\end{proposition}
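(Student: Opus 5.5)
The fibers are the easy part. For each $x\in X$, the fiber $U\cap L_x$ is $\{v\in L_x\colon \|v\|<\eps\}$. Since $L_x\cong\C$ and the Hermitian norm on $L_x$ is a positive multiple of the absolute value, this set is a round disc and hence biholomorphic to $D$. The substance of the proof is to show that $p\colon U\to X$ is not holomorphically locally trivial. I plan to derive a contradiction from local triviality by way of pseudoconcavity.

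Positivity of $L$ means we may take a metric with positive curvature. The given metric $\|\cdot\|$ is arbitrary, but any two metrics differ by a smooth positive factor, so I would rather avoid relying on curvature of the given metric. Instead I use a weaker, metric-free fact. Since $L$ is positive, the dual bundle $L^{*}$ is negative, and by Grauert's criterion the zero section of $\Tot_{L^*}$ is exceptional. Dually, $\Tot_L$ has the following property: every holomorphic function on a neighborhood of infinity is constant along fibers. More concretely, I would show that any holomorphic function $h$ on $U$ extends holomorphically to all of $\Tot_L$. This is the pseudoconcavity step. The boundary $\partial U=\{\|v\|=\eps\}$ is strictly pseudoconcave when viewed from inside $U$, and this holds precisely when $\log\|v\|$ has positive curvature form. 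If the given metric is not itself positively curved, I would instead use a compactness argument: compute the Levi form of $\log\|v\|^2$ in the direction transverse to the fibers. Here $-i\partial\bar\partial\log\|v\|^2$ equals the curvature form $\Theta$ pulled back, and since $\Theta$ represents $c_1(L)>0$, one cannot have $\partial U$ pseudoconvex everywhere. Integrating against $\omega^{n-1}$ over the compact $X$ shows that $\Theta$ must be positive in some direction at some point. At such a point $U$ is locally pseudoconcave, so by Hartogs/Levi extension every holomorphic function near that boundary point extends past it.

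Now assume local triviality. Near $x_0$ there is a biholomorphism $p^{-1}(V)\cong D\times V$ over $V$, and the coordinate $z$ on $D$ gives a holomorphic function $\phi$ on $p^{-1}(V)$ with $|\phi|<1$ that restricts to a biholomorphism on each fiber. I would choose $x_0$ to be a point where $\partial U$ is strictly pseudoconcave in some transverse direction; by the integration argument above such a point exists, and since positivity of the curvature is an open condition, the same holds over a neighborhood of $x_0$. By the Levi extension, $\phi$ extends holomorphically to a neighborhood of some boundary point $b\in\partial U\cap L_{x_1}$ with $x_1$ near $x_0$, and $|\phi|\le1$ persists on the extension by continuity and the maximum principle. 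On the fiber $L_{x_1}$, however, $\phi$ is a biholomorphism of the disc $\{\|v\|<\eps\}$ onto $D$, i.e. a Möbius map. It extends across $b$ with $|\phi(b)|=1$ and $|\phi|>1$ just outside $b$ in the fiber. The extension is holomorphic on a full neighborhood $W$ of $b$, which contains points of $L_{x_1}$ outside $U$, so these two descriptions must be reconciled. The contradiction comes from applying the maximum principle to the pseudoconcave side: $W\setminus\overline U$ is small, and $|\phi|$ attains the value $1$ at $b$ along nearby fibers in the direction where $\partial U$ bends. Concretely, take a holomorphic disc $\Delta$ through $b$ with $\Delta\setminus\{b\}\subset U$, which exists by strict pseudoconcavity. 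Then $|\phi|<1$ on $\Delta\setminus\{b\}$ and $|\phi(b)|=1$, so the maximum principle on $\Delta$ makes $\phi$ constant of modulus $1$ on $\Delta$. This is impossible, since $\Delta\setminus\{b\}$ lies in $U$, where $|\phi|<1$.

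I expect the main obstacle to be the first step: producing a point of strict pseudoconcavity, together with a holomorphic disc touching $\partial U$ from inside, for an arbitrary metric rather than a positively curved one. The plan there is the trace argument, which uses the fact that $\int_X\Theta\wedge\omega^{n-1}>0$ forces some eigenvalue of $\Theta$ to be positive. I also need to check that the disc $\Delta$ can be chosen inside the trivializing neighborhood $p^{-1}(V)$, which is ensured by choosing $x_0$ first and then $V$ around it.
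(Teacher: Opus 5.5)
Your argument is correct, but it reaches the contradiction differently from the paper. The paper takes $V$ Stein, so $p^{-1}(V)\cong V\times D$ is Stein and hence a domain of holomorphy in $\C^{n+1}$. Levi extension across a strongly pseudoconcave piece of $\partial U$ forbids this.

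You never use Steinness. You work with the single trivializing coordinate $\phi$ and note that it would be a peak function at $b$: modulus $<1$ on $p^{-1}(V)$ and modulus $1$ at $b$. You rule this out by the maximum principle on a holomorphic disc $\Delta$ touching $\partial U$ from inside at $b$. This needs only one negative eigenvalue of the Levi form, i.e.\ one positive eigenvalue of the curvature.

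Your route even lets you drop the Levi extension step. Since $(p,\phi)\colon p^{-1}(V)\to V\times D$ is a biholomorphism, it is proper over $V$, so $|\phi(v)|\to 1$ as $v\to b$ inside $U$. The bounded function $\phi|_{\Delta\setminus\{b\}}$ then extends across $b$ by Riemann's removable singularity theorem, with modulus $1$ at $b$.

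You also deal explicitly with the fact that the metric is arbitrary. The paper leaves this implicit: its ``for a point $x\in X$'' has to be a suitably chosen point. Your trace argument works once $\omega$ is taken to be Kähler. A quicker way is to compare with a positively curved metric $\|\cdot\|_0$. At a maximum point of $\|\cdot\|/\|\cdot\|_0$ on $X$, the curvature of $\|\cdot\|$ dominates that of $\|\cdot\|_0$, so it is positive definite there.

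Two small corrections. The sentence claiming that $|\phi|\le 1$ persists on the extension is false; as you note right after, $|\phi|>1$ just outside $b$ in the fiber. Nothing depends on it. The digression on Grauert's criterion and on extending functions from all of $U$ is not used and can be deleted.
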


Then, we show that Conjecture~\ref{conj} is true for simply connected
complex manifolds. Actually, in this case a somewhat stronger result
holds. 

\begin{proposition}\label{prop:simply-connected}
If $X$ is a simply connected complex manifold, $L$ is a holomorphic
line bundle $L$ on $X$ that is holomorphically non-trivial, and
$U\subset \Tot_L$ is a neighborhood of the zero section
such that, for any $x\in X$, the intersection $L_x\cap U$
is biholomorphic to the unit disc $D\subset\C$, then the natural
projection $p\colon U\to X$ is not holomorphically locally trivial.
\end{proposition}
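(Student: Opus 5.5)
Suppose, to the contrary, that $p\colon U\to X$ is holomorphically locally trivial with fiber $D$. The plan is to show that this forces $L$ to be holomorphically trivial.

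The first step is to observe that the structure group of a holomorphically locally trivial bundle with fiber $D$ reduces to $\Aut(D)\cong\PSL_2(\R)$. Indeed, the transition maps $V_\alpha\cap V_\beta\to\Aut(D)$ are holomorphic in the base variable. This is the content of the forthcoming Corollary~\ref{corollary}: such a bundle is a \emph{flat} $\PSL_2(\R)$-bundle. Heuristically, $\Aut(D)$ is a real Lie group, so a holomorphic family $x\mapsto g_x\in\Aut(D)$ must be locally constant. For instance, $g_x(0)$ is a holomorphic map into $D$. Its derivative in $x$ must be compatible with the anti-holomorphic parts of the real group, and one checks that the coefficients, being holomorphic and real-constrained, are locally constant.

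Since $X$ is simply connected, a flat bundle is induced by the trivial homomorphism $\pi_1(X)\to\PSL_2(\R)$. Hence $U\cong X\times D$ biholomorphically over $X$.

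Next, I exploit the zero section $s_0\colon X\to U$. Under the trivialization it becomes a holomorphic map $\sigma\colon X\to D$, and composing fiberwise with the automorphism of $D$ sending $\sigma(x)$ to $0$ is holomorphic in $x$. After this change we get a biholomorphism $\Phi\colon U\to X\times D$ over $X$ taking the zero section to $X\times\{0\}$.

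Now I differentiate along the zero section. The relative tangent bundle of $U\to X$, restricted to the zero section, is canonically $L$, since the fibers are open subsets of $L_x$. The relative tangent bundle of $X\times D\to X$ along $X\times\{0\}$ is the trivial bundle. Therefore $d\Phi$ gives a holomorphic isomorphism from $L$ onto the trivial line bundle, contradicting the hypothesis that $L$ is holomorphically non-trivial.

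The main obstacle is the first step, showing that local holomorphic triviality forces locally constant transition functions in $\PSL_2(\R)$. Since the paper provides Corollary~\ref{corollary}, I would invoke it. If proving it directly, I would write $g_x(z)=e^{i\theta(x)}(z-a(x))/(1-\overline{a(x)}z)$. Holomorphy of $g_x(z)$ in $x$ for every fixed $z$ forces $\overline{a}$ to be holomorphic as well as $a$, hence $a$ is locally constant, and then $e^{i\theta}$ is a holomorphic function of modulus one, hence also locally constant.
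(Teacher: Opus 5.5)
Your overall route is the paper's: Corollary~\ref{corollary} gives flatness, simple connectivity gives a biholomorphism $F\colon U\to X\times D$ over $X$, and the relative tangent bundle along the zero section then identifies $L$ with a trivial bundle. Your Taylor-coefficient sketch of the corollary is also fine. The coefficients of $z^0,z^1,z^2$ in $g_x(z)$ are $-e^{i\theta}a$, $e^{i\theta}(1-|a|^2)$ and $e^{i\theta}\bar a(1-|a|^2)$, so $a$ and $\bar a$ are both holomorphic.

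There is, however, one false step. The fiberwise normalization $(x,w)\mapsto\bigl(x,(w-\sigma(x))/(1-\overline{\sigma(x)}w)\bigr)$ is \emph{not} holomorphic unless $\sigma$ is locally constant: its $\partial/\partial\bar x_j$-derivative is $w(w-\sigma)\,\overline{\partial\sigma/\partial x_j}/(1-\bar\sigma w)^2$. In fact Lemma~\ref{lemma}, which you invoke, says exactly that a biholomorphism of the form $(x,w)\mapsto(x,g(x)w)$ has $g$ locally constant. So no holomorphic normalization can carry a non-constant graph to $X\times\{0\}$; the paper remarks on this explicitly in the proof of Lemma~\ref{ghnm}. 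Moreover $\sigma$ need not be constant, since $X$ is not assumed compact.

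As written, your $\Phi$ is therefore only $C^\infty$. Its relative differential along the zero section equals the holomorphic one coming from $F$ multiplied by the non-holomorphic factor $1/(1-|\sigma|^2)$. This yields only a smooth trivialization of $L$, which contradicts nothing: on $X=\C^2\setminus\{0\}$, for instance, holomorphically nontrivial line bundles are topologically trivial.

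The repair is simply to drop the normalization. The bundle $T_{X\times D|X}=\pr_D^*T_D$ is holomorphically trivialized by $\partial/\partial w$ on all of $X\times D$. Hence $T_{U|X}=F^*\pr_D^*T_D$ is holomorphically trivial on all of $U$, and restricting to the zero section $Z$ gives $L\cong T_{U|X}|_Z\cong\Oo_X$. This is exactly the paper's argument.
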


This proposition is not a consequence of Conjecture~\ref{conj} since
there exist pairs $(X,L)$ where $X$ is simply connected and $L$ is a
line bundle that is trivial topologically (so $c_\R(L)=0$, of course)
but not holomorphically (put $X=\C^2\setminus\{0\}$, for example).

For the case of not simply connected~$X$, we have the following two results.

\begin{proposition}\label{Milnor_style}
Conjecture~\ref{conj} is true if $X$ is a compact Riemann surface of
genus~$g\ge 2$ 
and $L$ is a holomorphic line bundle on~$X$ such that ${|\deg L|\ge g}$.   
\end{proposition}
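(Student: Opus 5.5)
Assume, to get a contradiction, that $p|_U\colon U\to X$ is holomorphically locally trivial with fiber $D$. The plan is to turn this bundle into a flat circle bundle and then apply the Milnor--Wood inequality.

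\emph{Step 1: reduction to a flat bundle.} By Corollary~\ref{corollary}, $p|_U$ is a flat $\PSL_2(\R)$-bundle, where $\PSL_2(\R)\cong\Aut(D)$. Since $X$ is a compact Riemann surface, flatness means the bundle is induced by a homomorphism $\rho\colon\pi_1(X)\to\PSL_2(\R)$.

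\emph{Step 2: attach a circle bundle and compute its Euler class.} Each element of $\PSL_2(\R)$ extends to the closed disc, so $\rho$ also defines a flat oriented circle bundle $S\to X$ (the boundary circles), with structure group $\PSL_2(\R)\subset\mathrm{Homeo}^+(S^1)$. I will identify its Euler class with $\pm\deg L$. The fiberwise tangent bundle of $U$ restricted to the zero section is $L$. Topologically, $D$-bundles with structure group $\PSL_2(\R)$ reduce to the maximal compact subgroup $\mathrm{PSO}(2)\cong\mathrm U(1)$. Under this reduction, the tangent line at a chosen section point gives a complex line bundle whose Chern class equals the Euler class of the associated circle bundle (up to sign conventions). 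The zero section is a holomorphic section of the $D$-bundle, so by contractibility of $D$ it is homotopic to the reduced center section. The vertical tangent bundle along a section is therefore isomorphic to $L$ topologically. The circle bundle $S$ comes from the same $\mathrm U(1)$-reduction, via the action of $\mathrm{PSO}(2)$ on $\partial D$. So $|e(S)|=|\deg L|$, possibly up to a factor of $1$ or $2$ depending on whether the rotation action on the tangent space has weight $1$. I need to check this: the derivative of the rotation $z\mapsto e^{i\theta}z$ at $0$ is $e^{i\theta}$, so the weight is $1$ and no factor of $2$ appears.

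\emph{Step 3: Milnor's inequality.} Milnor's paper~\cite{Milnor}, in the form refined by Wood for $\PSL_2(\R)$, gives $|e(S)|\le g-1$ for flat $\PSL_2(\R)$-bundles over a surface of genus $g\ge 2$. This contradicts $|\deg L|\ge g$. Note that $c_\R(L)\neq0$ follows automatically from $|\deg L|\ge g\ge 2$, so this proves the claim of Conjecture~\ref{conj} in this case.

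The main obstacle is Step~2. One must carefully identify the vertical tangent bundle along the zero section with $L$, and match it with the Euler class of the associated circle bundle, including signs and the weight of the $\mathrm U(1)$ action. I would do this by computing in trivializations where the transition functions are Möbius maps fixing a chosen section.
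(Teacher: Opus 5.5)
\paragraph{The gap: Step 3 uses the wrong inequality.}
The bound $|e|\le g-1$ is Milnor's inequality for flat $\SL_2(\R)$ (equivalently $\GL_2^+(\R)$) \emph{plane} bundles. Your $S$ is a flat circle bundle with structure group $\PSL_2(\R)$ acting on $\partial D$. For such bundles the Milnor--Wood inequality is $|e(S)|\le 2g-2$, and this bound is sharp.

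Your own Step~2 shows this. Its weight-one computation is correct. Write $X=D/\Gamma$ with $\Gamma$ Fuchsian, and take $(D\times D)/\Gamma\to X$, with $\Gamma$ acting diagonally and projection to the first factor. This is a holomorphically locally trivial flat $D$-bundle, and the diagonal is a holomorphic section of it. The vertical tangent bundle along the diagonal is $T_X$, so $|e(S)|=|\deg T_X|=2g-2>g-1$.

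Consequently, Steps~1--3 run with the correct inequality give a contradiction only for $|\deg L|\ge 2g-1$. The range $g\le|\deg L|\le 2g-2$ is not covered. The example also shows that no argument using only the flat $\PSL_2(\R)$-structure and the homotopy class of a section can cover that range.

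\paragraph{Comparison with the paper, and what is missing.}
The paper proceeds differently. It passes to an unramified double cover of genus $2g-1$ to lift the structure group to $\SL_2(\R)$. It then combines Milnor's $|e(\xi)|<2g-1$ with Lemma~\ref{ghnm}, $e(\xi)=s^*c_1(T_{U|X})$, which gives $|e(\xi)|=2|\deg L|$. The threshold $g$ comes from that identification.

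However, your weight count, transported to $\SU(1,1)$, shows that $\mathrm{diag}(e^{i\alpha},e^{-i\alpha})$ corresponds to a rotation of $\R^2$ by $\pm\alpha$ but acts on $D$ by $z\mapsto e^{2i\alpha}z$. So the correct relation is $s^*c_1(T_{U|X})=\pm 2e(\xi)$. Indeed, Lemma~\ref{ghnm} as stated, applied to an $\SL_2(\R)$-lift of the Fuchsian representation above (which lifts over $X$ itself) and its diagonal section, would give $|e(\xi)|=2g-2\ge g$, contradicting Milnor's $|e(\xi)|<g$. With the factor corrected, the paper's route also reaches only $|\deg L|\ge 2g-1$. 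Adopting it will therefore not close your gap.

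What is needed is an input that uses the holomorphic embedding $U\subset\Tot_L$, not only topology. Here is one sketch; I have checked the computation but not written it out in full.
\begin{enumerate}
\item By Corollary~\ref{corollary}, choose charts in which $U$ has constant transition maps $g_{\alpha\beta}\in\Aut(D)$ and $L$ has transition functions $\ell_{\alpha\beta}$.
\item In these charts the inclusion is given by holomorphic functions $\phi_\alpha(x,w)$, injective in $w$ and vanishing at $w=\sigma_\alpha(x)$ (the zero section). They satisfy $\phi_\alpha(x,g_{\alpha\beta}w)=\ell_{\alpha\beta}(x)\phi_\beta(x,w)$.
\item Put $A_\alpha=\partial_w\phi_\alpha$ and $B_\alpha=\partial_w^2\phi_\alpha$, both evaluated at $w=\sigma_\alpha(x)$. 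Differentiate the identity in step 2 once and twice in $w$ at $w=\sigma_\beta(x)$.
\item One finds that the holomorphic $1$-forms $\eta_\alpha=(B_\alpha/A_\alpha)\,d\sigma_\alpha-d\log A_\alpha$ satisfy $\eta_\beta-\eta_\alpha=d\log\ell_{\alpha\beta}$.
\end{enumerate}
Hence the Atiyah class of $L$ vanishes, so $L$ admits a holomorphic connection. On a compact Riemann surface this forces $\deg L=0$. This proves the statement, in fact for every $L$ with $\deg L\ne 0$.
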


\begin{proposition}\label{tori}
Conjecture~\ref{conj} is true if $X$ is a complex torus \textup(that
is, $X=\C^n/\Lambda$, where $\Lambda\subset\C^n$ is a lattice of
rank~$2n$\textup) or $X\cong(\C^*)^m$ for some~$m\ge2$.
\end{proposition}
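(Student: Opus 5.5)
We argue by contradiction. Suppose $p|_U\colon U\to X$ is holomorphically locally trivial with fiber $D$. Then it is a fiber bundle whose structure group is $\Aut(D)\cong\PSL_2(\R)$, and by the announced Corollary~\ref{corollary} it is a flat $\PSL_2(\R)$-bundle. So it is induced by a homomorphism $\rho\colon\pi_1(X)\to\PSL_2(\R)$. In both cases of the proposition $\pi_1(X)\cong\Z^k$ is abelian: $k=2n$ for a torus and $k=m$ for $(\C^*)^m$.

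The key topological input relates this flat $D$-bundle to $L$. The zero section gives a section of $U\to X$, so the relative tangent bundle restricted to the zero section is $L$ itself. On the other hand, each fiber $U\cap L_x\cong D$ is contractible and $\PSL_2(\R)$ retracts onto $\mathrm{SO}(2)\cong\mathrm U(1)$. Hence the relative tangent bundle along any section is isomorphic, as an oriented real plane bundle, to the circle bundle associated with $\rho$ via the rotation-number (Euler class) map. Consequently $c_\R(L)$ equals the real Euler class $\rho^*e$ of the flat $\PSL_2(\R)$-bundle, where $e\in H^2(B\PSL_2(\R)^\delta,\R)$ is the (real) Euler class, or equivalently $e$ pulled back through $H^2(\pi_1(X),\R)\to H^2(X,\R)$.

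It therefore suffices to show that the real Euler class of any representation of $\Z^k$ into $\PSL_2(\R)$ vanishes; this contradicts $c_\R(L)\ne0$. Since $X$ is aspherical in both cases ($T^{2n}$ and $(S^1)^m$ up to homotopy), $H^2(X,\R)=H^2(\Z^k,\R)=\Lambda^2\R^k$. By the universal coefficient theorem, a class in this group vanishes if it pairs to zero with every $2$-torus $T^2\to X$ given by a pair of commuting generators $a,b$. The argument then follows Milnor's computation: for commuting $A=\rho(a)$ and $B=\rho(b)$, lift them to $\widetilde A,\widetilde B\in\uPSL$. The Euler number of the flat bundle over that torus is the integer $[\widetilde A,\widetilde B]\in\Z=\Ker(\uPSL\to\PSL_2(\R))$. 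Commuting elements of $\PSL_2(\R)$ lie in a common one-parameter subgroup (elliptic, parabolic or hyperbolic), or both are trivial or of order-two type. Their lifts can then be chosen in a connected abelian subgroup of $\uPSL$, so $[\widetilde A,\widetilde B]=1$. Hence the Euler number on every such torus is $0$, so $\rho^*e=0$.

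The main obstacle is the identification $c_\R(L)=\rho^*e$. It requires checking that the $C^\infty$ structure of the flat $D$-bundle, with the zero section as a section, yields a vertical tangent bundle isomorphic to $L$ up to the Euler class (only the real class matters, which helps). I would handle this by deforming the section to a fixed point of the fibers is not available. Instead I would use that the space of sections of a bundle with contractible fiber is connected, so the pullback of the vertical tangent bundle along any section has the same Euler class. The vertical tangent bundle of the $D$-bundle is then associated to the $\mathrm U(1)$-reduction, which is $\rho$ composed with the retraction. A lesser point is the non-commuting-looking degenerate case where $A$ and $B$ are involutions, which is handled because commuting elliptic elements of $\PSL_2(\R)$ share a fixed point and so lie in one $\mathrm{SO}(2)$.
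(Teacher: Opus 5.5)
Your proof is correct, and it takes a genuinely different route from the paper's.

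\textbf{The paper's route.} After getting flatness from Corollary~\ref{corollary}, the paper pulls everything back by the doubling map $z\mapsto 2z$. This kills the $\Z/2$-obstruction to lifting the structure group from $\PSL_2(\R)$ to $\SL_2(\R)$. Via Lemma~\ref{ghnm}, the paper then identifies the Euler class of the lifted flat $\SL_2(\R)$-bundle with $c_1(f^*L)=4c_1(L)$, which is nonzero because $H^2$ of the torus is torsion-free. Finally it kills that Euler class with Milnor's Lemma~2, quoted verbatim for $\SL_2(\R)$, together with Lemma~\ref{commutators}.

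\textbf{Your route.} You skip the covering and the lifting obstruction altogether, and you work with real coefficients, so torsion never arises. You read off $c_1(L)$ as the Euler class of the $\mathrm{PSO}(2)$-reduction of the flat $\PSL_2(\R)$-bundle determined by the zero section, namely the isotropy action of $\mathrm{Stab}(0)\cong\mathrm U(1)$ on $T_0D$. You then use the circle-bundle version of Milnor's commutator formula, with $[\tilde A,\tilde B]\in\Ker(\uPSL\to\PSL_2(\R))=\pi_1(\PSL_2(\R))\cong\Z$. That formula is not literally what Milnor states, but his obstruction-theoretic proof carries over unchanged; say so rather than citing him directly. Your commutation step is a cleaner packaging of Lemma~\ref{commutators}: the lifts lie in the preimage of a one-parameter subgroup, which is abelian because the kernel is central.

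\textbf{What each buys.} The paper can quote Milnor as is. Your argument is shorter and free of normalization issues. In Lemma~\ref{ghnm}, the element $\mathrm{diag}(e^{\pi i t},e^{-\pi i t})$ acts on $T_0D$ by $e^{2\pi i t}$, while as an element of $\SL_2(\R)$ it is a rotation by $\pm\pi t$. So the comparison between $s^*c_1(T_{U|X})$ and the $\SL_2(\R)$ Euler class really carries a factor $\pm2$. This is harmless for tori, where only vanishing is used, and your $\PSL_2(\R)$ formulation never meets it.

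\textbf{Two wording fixes.} First, $\PSL_2(\R)$ retracts onto $\mathrm{PSO}(2)$, which is isomorphic to $\mathrm U(1)$, not onto $\mathrm{SO}(2)$. Second, the $\mathrm U(1)$-reduction is not ``$\rho$ composed with the retraction'' as a representation: the retraction is not a homomorphism, and the reduced bundle is generally not flat. It is the composite of classifying maps $X\to B\PSL_2(\R)^\delta\to B\PSL_2(\R)\simeq B\,\mathrm{PSO}(2)$, as your formulation with $H^2(B\PSL_2(\R)^\delta,\R)$ already indicates.
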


It is highly possible that Proposition~\ref{tori} may be extended to
arbitrary complex manifolds with commutative fundamental group, which
would imply our current Proposition~\ref{prop:simply-connected}.

Before passing to proofs, we mention one consequence of
Proposition~\ref{prop:simply-connected}. 

\begin{corollary}
Suppose that $H\subset\CP^n$, $n\ge2$, is a hyperplane and
$a\in\CP^n\setminus H$ is a
point. Let $K\subset\CP^n\setminus H$  be a compact subset such that
$a\in\Int(K)$ and, for
any line $\ell\subset\CP^n$, $\ell\ni a$, the set
$\ell\setminus K$ is
simply connected and not empty.

If one puts $Y=\CP^n\setminus K$ and denotes by $p\colon Y\to H$ the
projection from~$p$, then all the fibers of~$p$  are biholomorphic
to~$D$ but $p$ is not holomorphically locally trivial.
\end{corollary}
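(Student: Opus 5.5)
The plan is to realize $Y$ as an open subset of the total space of a line bundle on~$H$ and then apply Proposition~\ref{prop:simply-connected}. First, blow up $\CP^n$ at~$a$. The projection from~$a$ extends to a morphism $\widetilde{\CP^n}\to H$, which is the $\CP^1$-bundle $\mathbb P(\Oo\oplus\Oo(1))$ over $H\cong\CP^{n-1}$. In this bundle $H$ is embedded as a section disjoint from the exceptional divisor~$E$. Removing $E$ identifies $\CP^n\setminus\{a\}$ with the total space of the normal bundle of $H$ in $\CP^n$, namely $L=\Oo_H(1)$. Under this identification $H$ is the zero section, the lines $\ell\ni a$ correspond to the fibers $L_x$ (where $x=\ell\cap H$), and the point $a$ corresponds to the point at infinity of each fiber. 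So $L$ is holomorphically non-trivial, since its degree is~$1$, and $X=H$ is simply connected.

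Next, put $U=Y=\CP^n\setminus K$. Since $K\subset\CP^n\setminus H$ is compact and $a\in K$, the set $U$ is an open subset of $\Tot_L$ containing the zero section~$H$. Thus it is a neighborhood of the zero section, and $p$ is the restriction of the bundle projection. For each~$x$, the intersection $L_x\cap U$ equals $\ell\setminus K$, which is an open subset of $\ell\cong\CP^1$. This set is nonempty, connected and simply connected by hypothesis. It misses the open set $\Int(K)\cap\ell\ni a$, so its complement in $\ell$ contains a nonempty open set, in particular more than one point. By the Riemann mapping theorem, $\ell\setminus K$ is therefore biholomorphic to~$D$.

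With both hypotheses verified, Proposition~\ref{prop:simply-connected} applies and shows that $p\colon Y\to H$ is not holomorphically locally trivial. There is no genuine obstacle here. The only points needing care are the identification of $\CP^n\setminus\{a\}$ with $\Tot_{\Oo_H(1)}$, which can also be seen directly in coordinates, and the observation that the complement of $\ell\setminus K$ has more than one point, which is what excludes $\C$ and $\CP^1$ as fibers.
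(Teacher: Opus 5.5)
Your proposal is correct and follows the paper's own proof. The paper likewise identifies $\CP^n\setminus\{a\}$ with $\Tot_{\Oo_H(1)}$ so that projection from $a$ becomes the bundle projection, obtains the disc fibers from the Riemann mapping theorem, and concludes by Proposition~\ref{prop:simply-connected}. You supply details the paper leaves implicit: the blow-up description of the identification, why $Y$ is an open neighborhood of the zero section, and why each $\ell\setminus K$ misses at least two points of $\ell$.
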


\begin{proof}
The fibers of~$p$ are biholomorphic to~$D$ by Riemann's mapping theorem.
  
Observe that $\CP^n\setminus\{a\}$ is biholomorphic to the total space of
the line bundle $\Oo_{\CP^{n-1}}(1)=\Oo_H(1)$ and that, with this
identification in mind, the canonical projection from the total space
to the base is just the projection onto~$H$ with center~$a$. Since
$H$ is simply connected and $c_\R(\Oo_{\CP^{n-1}}(1))\ne 0$, the
assertion follows from Proposition~\ref{prop:simply-connected}.
\end{proof}

\section{A lemma on Euler class}

Recall some standard facts.

Suppose that $X$ is a good enough topological space, say, a
CW-complex.  To any $\GL_2^+(\R)$-principal bundle~$\xi$ on~$X$, where
$\GL_2^+(\R)$ is the group of real $2\times2$-matrices with positive
determinant, one can assign its \emph{Euler class} $e(\xi)\in
H^2(X,\Z)$; it coincides with the Euler class of the oriented real
vector bundle of rank~$2$ associated to~$\xi$. If one is given a Lie
group~$G$ together with a homomorphism $G\to\GL_2^+(\R)$, then any
$\GL_2^+(\R)$-bundle can be regarded as a $G$-bundle, so it makes
sense to speak of Euler classes of $G$-bundles in this situation. In
particular, having in mind the standard embeddings
$\SL_2(\R)\hookrightarrow \GL_2^+(\R)$ and $\C^*\hookrightarrow
\GL_2^+(\R)$, the latter being $u+iv\mapsto \left(
\begin{smallmatrix}u&-v\\ v&u  \end{smallmatrix}\right)$,
one can speak of the Euler class of $\SL_2(\R)$-bundles or
of complex line bundles (which are $\C^*$-bundles).

Suppose now that $X$ is a smooth manifold, $F$ is a complex manifold, and
$G$ is a real Lie group acting on~$F$ by biholomorphisms. If $p\colon
E\to X$ is a $G$-bundle over~$X$ with fiber~$F$, then the
relative tangent bundle~$T_{E|X}$ is a complex
vector bundle of (complex) rank~$\dim_\C F$.

Now we identify $\SL_2(\R)$ with the
group
\begin{equation*}
  \SU(1,1)=\left\{
  \begin{pmatrix}
    p& q\\ \bar q&\bar p
  \end{pmatrix}
\in \Mat_2(\C)\colon |p|^2-|q|^2=1
  \right\}.
\end{equation*}
The latter acts on the unit disc~$D$ by
biholomorphisms
$z\mapsto \frac{pz+q}{\bar q z+\bar p}$,
each automorphism of~$D$ is of this form, and
$\Aut(D)=\SL_2(\R)/\{\pm1\}=\SU(1,1)/\{\pm1\}$. 

\begin{lemma}\label{ghnm}
Suppose that $X$ is a smooth manifold, $\xi$ is a principal
$\SL_2(\R)$-bundle over~$X$, and $p\colon U\to X$ is the bundle
over~$X$ with fiber~$D$ 
corresponding to~$\xi$ and the action of $\SL_2(\R)=\SU(1,1)$
described above.

If $s\colon X\to U$ is a smooth section, then
$e(\xi)=s^*c_1(T_{U|X})$. 
\end{lemma}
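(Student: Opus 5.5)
The plan is to reduce everything to a computation for the maximal compact subgroup. Both sides of the asserted equality are natural under pullback and depend only on homotopy classes of data. First, since the fiber $D$ is contractible, any two smooth sections of $p\colon U\to X$ are homotopic through sections (fiberwise straight-line homotopy in a local trivialization, patched by a partition of unity, or simply the contractibility of the space of sections of a bundle with contractible fibers). Hence $s^*c_1(T_{U|X})$ does not depend on $s$, and it suffices to prove the formula for one conveniently chosen section.

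Second, I will reduce the structure group of $\xi$ from $\SL_2(\R)=\SU(1,1)$ to its maximal compact subgroup
\begin{equation*}
K=\left\{\begin{pmatrix} p&0\\ 0&\bar p\end{pmatrix}\colon |p|=1\right\}\cong \mathrm U(1).
\end{equation*}
This is possible because $\SU(1,1)/K\cong D$ is contractible. After the reduction, $\xi$ is given by a cocycle $g_{\alpha\beta}$ with values in $K$. The Euler class $e(\xi)$ is unchanged, since the reduction is an isomorphism of $\GL_2^+(\R)$-bundles, and $e(\xi)$ is then the first Chern class of the complex line bundle defined by the corresponding character of $\mathrm U(1)$. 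Every element of $K$ fixes $0\in D$, so the zero section $s_0$, defined by $z=0$ in every $K$-trivialization, is a well-defined global smooth section of $U$. By the first step we may take $s=s_0$.

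Third, I will compute $s_0^*T_{U|X}$ directly. In a trivialization over $V_\alpha$ its fiber is $T_0D=\C$. The transition function is the derivative at $0$ of $z\mapsto \frac{p z}{\bar p}$, namely the character $\chi\colon p\mapsto p/\bar p$ of $K$. Thus $s_0^*T_{U|X}$ is the complex line bundle associated to the $\mathrm U(1)$-bundle $\xi_K$ via $\chi$, and its $c_1$ is expressed through the $c_1$ of the line bundle associated to the tautological character $p\mapsto p$.

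The main obstacle is the last step: matching the character $\chi$ with the character through which the paper's identification $\SL_2(\R)\cong\SU(1,1)$, composed with $\SL_2(\R)\hookrightarrow\GL_2^+(\R)$, defines $e(\xi)$. Concretely, I will write down the isomorphism $\SL_2(\R)\to\SU(1,1)$ explicitly (conjugation by the Cayley matrix), see which rotation subgroup $SO(2)\subset\SL_2(\R)$ goes to $K$ and with what parametrization, and compare the resulting representation of $K$ on $\R^2\cong\C$ with $\chi$. I would also check the orientation conventions, so that $\C^*\hookrightarrow\GL_2^+(\R)$ matches the complex orientation of $T_0D$. This is exactly where the stated formula $e(\xi)=s^*c_1(T_{U|X})$ is either confirmed or fails by a constant factor. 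I will carry out this check carefully, testing it on a single circle-bundle example, such as the unit tangent bundle of a hyperbolic surface, before concluding.
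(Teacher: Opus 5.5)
\paragraph{Verdict.} Your setup is correct, but the step you postponed cannot be completed as the statement requires: carried out, your own computation shows the lemma is off by a factor $\pm2$.

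\paragraph{Where the setup agrees with the paper.} Taking $s$ independent up to homotopy, reducing to $K$, passing to the zero section and obtaining the character $\chi(p)=p/\bar p$ is sound. It is essentially the paper's route. The paper moves $s$ to the origin with the M\"obius maps $w\mapsto (w-s_\alpha)/(1-\bar s_\alpha w)$, and this simultaneously reduces the cocycle to the stabilizer $K$ of $0$.

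\paragraph{The postponed comparison.} Your character $\chi(p)=p^2$ has degree $2$ on $K\cong\mathrm U(1)$. However, $K$ acts on the plane defining $e(\xi)$ through a character of degree $\pm1$.
\begin{itemize}
\item Under the Cayley identification, the rotation $R_\theta\in\mathrm{SO}(2)\subset\SL_2(\R)$ acts on $\R^2=\C$ by $e^{i\theta}$.
\item Its image in $\SU(1,1)$ is $\mathrm{diag}(e^{-i\theta},e^{i\theta})$, which acts on $D$ by $\zeta\mapsto e^{-2i\theta}\zeta$.
\item Any other identification $\SL_2(\R)\cong\SU(1,1)$ changes only the sign.
\end{itemize}
Let $E$ be the plane bundle of $\xi$, viewed as a complex line bundle. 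Then $s^*T_{U|X}\cong E^{\otimes(\mp2)}$, so
\begin{equation*}
s^*c_1(T_{U|X})=\mp2\,e(\xi),
\end{equation*}
not $e(\xi)$.

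\paragraph{Counterexamples.} Take $X=S^2$, and let $\xi$ be induced from the oriented frame bundle of $TS^2$ via $\mathrm{SO}(2)\subset\SL_2(\R)$. Then $e(\xi)=2$. But $U$ is the unit disc bundle of $(TS^2)^{\otimes(\mp2)}$, and its zero section gives $s^*c_1(T_{U|X})=\mp4$. The test case you proposed shows the same discrepancy. For the $\SL_2(\R)$-lift of a Fuchsian representation of a genus-$g$ surface, $|e(\xi)|=g-1$. Yet the graph of the developing map is a section with $s^*T_{U|X}\cong TX$, of degree $2-2g$.

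\paragraph{The corresponding slip in the paper's proof.} The paper asserts that $e(\xi)$ and $c_1(s^*T_{U|X})$ are represented by the same cocycle.
\begin{itemize}
\item $\tilde g_{\alpha\beta}=\mathrm{diag}(e^{\pi i t_{\alpha\beta}},e^{-\pi i t_{\alpha\beta}})$ is a genuine $\SU(1,1)$-cocycle. Hence $n_{\alpha\beta\gamma}=t_{\alpha\beta}+t_{\beta\gamma}-t_{\alpha\gamma}$ lies in $2\Z$.
\item The plane bundle has transition functions $e^{\pm\pi i t_{\alpha\beta}}$, so $e(\xi)$ is represented by $\pm\frac12 n$.
\item $\tilde g_{\alpha\beta}$ acts on $D$ by $z\mapsto e^{2\pi i t_{\alpha\beta}}z$, so $c_1(s^*T_{U|X})$ is represented by $n$ itself.
\end{itemize}
The cocycle $n$ computes the Euler class of the associated $\PSL_2(\R)$-circle bundle, which is twice the $\SL_2(\R)$ Euler class.

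\paragraph{What you should state instead.} Once the last step is done, your argument proves $s^*c_1(T_{U|X})=\pm2e(\xi)$, and that is the statement you should record. This affects the downstream results:
\begin{itemize}
\item The argument of Proposition~\ref{Milnor_style} then yields a contradiction only when $|\deg L|\ge 2g-1$.
\item Proposition~\ref{tori} survives, since there only $e\ne0$ is used.
\end{itemize}
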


\begin{proof}
Suppose that the bundle $\xi$ is trivializable over an open cover
$X=\bigcup V_\alpha$, with transition functions $g_{\alpha\beta}\colon
V_\alpha\cap V_\beta\to\SU(1,1)$; let $f_\alpha\colon
p^{-1}(V_\alpha)\to V_\alpha\times D$ be the corresponding
trivializations of $p\colon U\to X$. One has
$f_\alpha(y)=(p(y),h_\alpha(y))$, where $h_\alpha\colon
p^{-1}(V_\alpha)\to D$. Put $s_\alpha=h_\alpha\circ
s|_{V_\alpha}\colon V_\alpha\to D$.  Let us modify the mappings
$h_\alpha$ by replacing them with
\begin{equation*}
  \tilde h_\alpha\colon y\mapsto 
  \frac{h_\alpha(y)-s_\alpha(p(y))}
       {-\overline{s_\alpha(p(y))}h_\alpha(y)+1}.
\end{equation*}
(Observe that even if $X$ is a complex manifold and both the
bundle~$p$ and the section~$s$ are holomorphic, as well as the
trivializations~$f_\alpha$, the new trivializations $\tilde
f_\alpha=(p,\tilde h_\alpha)$ need not be holomorphic.)

With these new trivializations, the functions $\tilde s_\alpha=\tilde
h_\alpha\circ s|_{V_\alpha}\colon V_\alpha\to D$ are identically zero,
so the new transition finctions $\tilde g_{\alpha\beta}\colon
V_\alpha\cap V_\beta\to \SU(1,1)$ map the points of $V_\alpha\cap
V_\beta$ to zero-preserving automorphisms of~$D$. Hence
(maybe after passing to a
finer cover) these transition functions will have the form
\begin{equation*}
\tilde g_{\alpha\beta}(x)=
\begin{pmatrix}
  e^{\pi it_{\alpha\beta}(x)} & 0 \\
  0 & e^{-\pi it_{\alpha\beta}(x)}
\end{pmatrix},
\end{equation*}
where $t_{\alpha\beta}(x)\in\R$.

Now it is clear that both $e(\xi)$ and $c_1(s^*T_{U|X})$ may be
represented by the same \v{C}ech
cocycle~$\{t_{\alpha\beta}+t_{\beta\gamma}-t_{\alpha\gamma}\}$. Since,
for complex line bundles, Euler class and first Chern class coincide,
we are done.
\end{proof}

\section{An observation on the universal covering of
  $\PSL_2(\R)$}

Let $\uPSL$ be the universal covering of $\PSL_2(\R)$ (we regard
$\uPSL$ as an abstract group, ignoring the topology).  The aim of this
section is to give an explicit description of this group in the form
suitable for our purposes. (I believe that this description is known,
but I failed to find an approriate reference.) A more algebraic
description can be found, for example, in \cite[Section 6]{Rawnsley}.

We fix notation first. If $\gamma_1,\gamma_2\colon [0;1]\to X$ are two
paths in a topological space~$X$, and if $\gamma_1(1)=\gamma_2(0)$,
then $\gamma_1\cdot\gamma_2\colon [0;1]\to X$ is the path defined by
the standard formula
\begin{equation*}
\gamma_1\cdot\gamma_2(t)=
\begin{cases}
  \gamma_1(2t),&0\le t\le\frac12,\\
  \gamma_2(2t-1),&\frac12\le t\le1;
\end{cases}
\end{equation*}
if a group $G$ acts on~$X$ and $\gamma$ is a path in~$X$, then
$g\gamma\colon [0;1]\to X$, where $g\in G$, is the path defined by the
formula $(g\gamma)(t)=g(\gamma(t))$.

We will identify the group $\PSL_2(\R)$ with the group of
automorphisms of the upper half-plane~$H$ or of the unit disc~$D$.

In the first case, we will consider the action of $\PSL_2(\R)=\Aut(H)$
on $\R\cup\{\infty\}$, in the second case, we will consider the action
of $\PSL_2(\R)=\Aut(D)$ on~$\partial D$ (the boundary of~$D$).

Observe that $\uPSL$ is the universal covering of $\SL_2(\R)$ as well.

\begin{proposition}\label{description}
As an abstract group, $\uPSL$ is isomorphic to the set of pairs
$(g,[\gamma])$, where $g\in\Aut(H)$ and   $[\gamma]$ is a homotopy
class of paths $\gamma\colon [0;1]\to\R\cup\{\infty\}$ for which
$\gamma(0)=0$, $\gamma(1)=g(0)$, with multiplication defined by the
formula
\begin{equation}\label{eq:mult}
  (g_1,[\gamma_1])(g_2,[\gamma_2])=
(g_1g_2,[\gamma_1\cdot g_1\gamma_2]).
\end{equation}

Alternatively, $\uPSL$ is isomorphic to the set of pairs
$(g,[\gamma])$, where $g\in\Aut(D)$ and   $[\gamma]$ is a homotopy
class of paths $\gamma\colon [0;1]\to\partial D$ for which
$\gamma(0)=1$, $\gamma(1)=g(1)$, with multiplication defined by the
same formula~\eqref{eq:mult}.
\end{proposition}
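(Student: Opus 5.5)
Since $\Aut(H)$ acts on $\R\cup\{\infty\}$ in the same way that $\Aut(D)$ acts on $\partial D$ (via the Cayley transform $z\mapsto\frac{z-i}{z+i}$, which sends $0$ to $-1$; one composes with a rotation so that $0$ goes to $1$), it suffices to prove the first description; the second is obtained by transporting everything along this conjugation. Let $G=\Aut(H)=\PSL_2(\R)$ and let $\tilde G$ be the set of pairs $(g,[\gamma])$ with the multiplication~\eqref{eq:mult}. First I check that $\tilde G$ is a group. The multiplication is well defined: $\gamma_1(1)=g_1(0)=g_1\gamma_2(0)$, $g_1\gamma_2(1)=g_1g_2(0)$, and homotopy classes of paths (rel endpoints) are respected by concatenation and by the action of $g_1$. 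Associativity follows from $(g_1g_2)\gamma_3=g_1(g_2\gamma_3)$ and associativity of concatenation up to homotopy. The identity is $(1,[\text{const}_0])$, and the inverse of $(g,[\gamma])$ is $(g^{-1},[g^{-1}\bar\gamma])$, where $\bar\gamma$ is the reversed path.

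Next I exhibit an isomorphism with the universal cover. The plan is to use the standard model of $\uPSL$ as homotopy classes, rel endpoints, of paths $\Gamma\colon[0;1]\to G$ with $\Gamma(0)=1$; the product is pointwise, $(\Gamma_1\Gamma_2)(t)=\Gamma_1(t)\Gamma_2(t)$, and this is homotopic to $\Gamma_1\cdot\Gamma_1(1)\Gamma_2$. Define
\[
\Phi([\Gamma])=\bigl(\Gamma(1),[t\mapsto\Gamma(t)(0)]\bigr).
\]
The second representation of the product shows at once that $\Phi$ is a homomorphism. It is well defined because a homotopy of $\Gamma$ rel endpoints induces a homotopy of the orbit path.

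It remains to show that $\Phi$ is bijective. This is the main step, and I would do it topologically. The evaluation map $\mathrm{ev}\colon G\to\R\cup\{\infty\}\cong S^1$, $g\mapsto g(0)$, is a fibration with fiber $\mathrm{Stab}(0)=\{z\mapsto z/(cz+1)\}\cdot\{z\mapsto az\}$, and this fiber is contractible ($\cong\R\times\R_{>0}$). In fact $G$ is diffeomorphic to $S^1\times$ (contractible), since the fibration is trivial over the circle via a section; for example, one can take rotations of $H$ about $i$ composed appropriately. Hence:

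\emph{Surjectivity.} Given $(g,[\gamma])$, lift $\gamma$ through the fibration to a path $\Gamma$ in $G$ with $\Gamma(0)=1$. Its endpoint $\Gamma(1)$ lies in $g\cdot\mathrm{Stab}(0)$, so $\Gamma(1)=gh$ with $h\in\mathrm{Stab}(0)$. Since the fiber is path connected, join $gh$ to $g$ by a path inside $g\,\mathrm{Stab}(0)$; its orbit path is constant, so the concatenated path maps to $(g,[\gamma])$.

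\emph{Injectivity.} If $\Phi([\Gamma])$ is trivial, then $\Gamma$ is a loop at $1$ whose image $\mathrm{ev}\circ\Gamma$ is null-homotopic in $S^1$. Because $\mathrm{Stab}(0)$ is contractible, the long exact sequence shows that $\mathrm{ev}_*\colon\pi_1(G)\to\pi_1(S^1)$ is an isomorphism, so $\Gamma$ is null-homotopic and $[\Gamma]=1$.

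The only real obstacle is justifying the fibration structure and the contractibility of the stabilizer, both of which follow from explicit formulas.
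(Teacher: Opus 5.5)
Your proposal is correct and takes essentially the same route as the paper: the path model of the universal cover, the homotopy $\Gamma_1\Gamma_2\simeq\Gamma_1\cdot\Gamma_1(1)\Gamma_2$, and the fact that $g\mapsto g(0)$ is a homotopy equivalence $\Aut(H)\to\R\cup\{\infty\}$. The differences are that you verify the last step explicitly (fibration with contractible stabilizer, path lifting for surjectivity, $\pi_1$-isomorphism for injectivity) where the paper simply invokes the homotopy equivalence, and you obtain the disc version by conjugating with the Cayley transform rather than repeating the argument.
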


\begin{proof}
If $G$ is an arbitrary topological group the
underlying space of which is Hausdorff, path connected, and locally simply
connected, then its universal covering $\tilde G$ is the set of pairs
$(g,[\gamma])$, where $[\gamma]$ is a homotopy class of paths in $G$
joining the neutral element~$e$ and the point~$g$, and the
multiplication law is
$(g_1,[\gamma_1])(g_2,[\gamma_2])=(g_1g_2,[\gamma_1\gamma_2])$,  
where $(\gamma_1\gamma_2)(t)=\gamma_1(t)\gamma_2(t)$. It is easy to
see that $\gamma_1\gamma_2\sim \gamma_1\cdot g_1\gamma_2$, so the
multiplication in~$\tilde G$ may be as well described by the
formula~\eqref{eq:mult}, in which $\gamma_j$ are understood as paths
in~$G$ starting from~$e$.

Since the mapping $g\mapsto g(0)$ (resp.\ $g\mapsto g(1)$) is a
homotopy equivalence of $\Aut(H)$ and~$\R\cup\{\infty\}$ (resp.\ of
$\Aut(D)$ and~$\partial D$), the proposition follows.
\end{proof}

\section{Proofs}\label{sec:proofs}

\begin{proof}[Proof of Proposition~\ref{positive}]
Arguing by contradiction, suppose that the projection~$p\colon U\to
X$, where $U$ and $X$ are as in~\eqref{eq:t}, is holomorphically
locally trivial. For a point $x\in X$, let $V\ni x$ be a neighborhood
over which $p$ is trivial and such that $V$ is biholomorphic to a
Stein open subset in~$\C^n$. Then $p^{-1}(V)\cong V\times D$ is also
Stein. As Grauert's argument from the proof of~\cite[\S\,3,
  Satz~1]{Grauert62} (with signs reversed) shows, positivity of~$L$
implies that $p^{-1}(V)$ is biholomorphic to an open subset
of~$\C^{n+1}$ containing a strongly pseudoconcave part of boundary.
Now the Levi extension theorem implies that this open subset cannot be
a domain of holomorphy (see~\cite{Siu}). Since, on the other hand,
this open subset is biholomorphic to~$V\times D$, we arrived at a
contradiction.
\end{proof}

To proceed, we need a lemma.

\begin{lemma}\label{lemma}
Suppose that $V\subset\C^n$ is an open subset and the mapping $g\colon
V\to\Aut(D)$, where $D\subset\C$ is the unit disc, is such that 
\begin{equation}\label{eq:G}
\begin{aligned}
  G\colon &V\times D\to V\times D,\\
  &(z,w)\mapsto(z,g(z)w)  
\end{aligned}
\end{equation}
is a biholomorphism from $V\times D$ to
itself. Then $g$ is locally constant.
\end{lemma}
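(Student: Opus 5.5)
Write $g(z)$ as a Möbius transformation
\[
g(z)w=\frac{p(z)w+q(z)}{\overline{q(z)}w+\overline{p(z)}},
\qquad |p(z)|^2-|q(z)|^2=1 .
\]
Here $(p,q)$ is determined only up to a common sign, but locally we may choose continuous, and in fact smooth, branches. The key point is that $G$ is holomorphic in $(z,w)$. So for each fixed $w\in D$ the function $z\mapsto g(z)w$ is holomorphic on $V$. In particular $a(z):=g(z)0$ is holomorphic, and so is $z\mapsto g(z)^{-1}$ applied to any point. The plan is to show that the holomorphic family $z\mapsto g(z)$ of automorphisms of $D$ must be constant. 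The obstruction is the real structure of $\Aut(D)$: it is a real Lie group with no nonconstant holomorphic maps into it.

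First, we may reduce to the case $n=1$. It suffices to show that $g$ is constant along every complex line segment in $V$, so we restrict to a disc in a complex line. Then $a(z)=g(z)(0)$ and $b(z)=g(z)(1/2)$, for instance, are holomorphic maps $V\to D$.

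Next, we normalize using the holomorphic composition $h(z)=\phi_{a(z)}\circ g(z)$, where $\phi_a(w)=(w-a)/(1-\bar a w)$. Unfortunately $\phi_{a(z)}$ depends antiholomorphically on $z$ through $\bar a$. So I would instead argue directly with the hyperbolic (Poincaré) distance $d$ on $D$. For fixed $w_1,w_2\in D$, the two maps $z\mapsto g(z)w_1$ and $z\mapsto g(z)w_2$ are holomorphic. Since $g(z)$ is an isometry, $d(g(z)w_1,g(z)w_2)=d(w_1,w_2)$ is constant in $z$.

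The main step is to show that two holomorphic maps $f_1,f_2\colon V\to D$ at constant positive hyperbolic distance must be constant. Take $w_1=0$ and let $w_2=w$ vary over all of $D$. Consider
\[
\psi(z,w)=\frac{g(z)w-a(z)}{1-\overline{a(z)}\,g(z)w}.
\]
This has modulus $|w|$ for all $z$, and for fixed $z$ it is holomorphic in $w$. It equals $\lambda(z)w$ with $|\lambda(z)|=1$, because $\phi_{a(z)}\circ g(z)$ fixes $0$ and is therefore a rotation. Hence
\[
g(z)w=\frac{\lambda(z)w+a(z)}{1+\overline{a(z)}\lambda(z)w}.
\]
Expanding in powers of $w$ gives $g(z)w=a(z)+\lambda(z)(1-|a(z)|^2)w+O(w^2)$, and the $k$th Taylor coefficient is $(-\bar a)^{k-1}\lambda^k(1-|a|^2)$. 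Each Taylor coefficient in $w$ of the holomorphic function $G$ is holomorphic in $z$. So $c_1=\lambda(1-|a|^2)$ and $c_2=-\bar a\lambda^2(1-|a|^2)$ are holomorphic, hence $c_2/c_1=-\bar a\lambda$ is holomorphic (note $c_1\neq0$). Then $c_1\cdot(-\bar a\lambda)\cdot$ suitable combinations let us extract the nonholomorphic parts. Concretely, the holomorphic function $-\bar a\lambda$ times the holomorphic function $a$ gives $-|a|^2\lambda$, so $\lambda=c_1+(-|a|^2\lambda)$ is holomorphic. A holomorphic function of constant modulus $1$ on a connected open set is constant, so $\lambda$ is constant. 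Then $\bar a=(c_2/c_1)/(-\lambda)$ is holomorphic, and so is $a$, so $a$ is constant. Hence $g$ is locally constant.

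I expect the only delicate point to be justifying that the Taylor coefficients in $w$ are holomorphic in $z$ (Cauchy's formula in $w$ with $z$ as a parameter) and handling connectedness. Both are routine.
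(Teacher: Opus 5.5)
Your proof is correct. It shares the paper's skeleton: write $g(z)$ as a rotation combined with a M\"obius map whose centre depends holomorphically on $z$, then use holomorphy in $z$, separately for each power of $w$, to show both parameters are constant. The mechanism is different, though.

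The paper takes the centre $s(z)=g(z)^{-1}(0)$, which is holomorphic because $G^{-1}$ is, and writes $g(z)w=e^{i\theta(z)}(w-s)/(1-\bar s w)$. It then applies $\partial/\partial\bar z_j$ to this formula and splits the resulting identity in $w$ into two equations. This requires $\theta$ to be differentiable, a point the paper passes over. It is true, e.g.\ because $e^{i\theta(z)}=(1-|s(z)|^2)\,\partial_w(\pr_2\circ G)(z,s(z))$.

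You take $a(z)=g(z)(0)$, which needs only holomorphy of $G$. You then extract $\lambda$ and $\bar a$ algebraically from the holomorphic Taylor coefficients $c_1=\lambda(1-|a|^2)$ and $c_2=-\bar a\lambda^2(1-|a|^2)$. So no regularity of $\lambda$ is ever used, which makes your route slightly more economical.

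Minor points:
\begin{itemize}
\item The correct identity is $\lambda=c_1+|a|^2\lambda=c_1-a\,(c_2/c_1)$. Your version has a sign slip, which does not affect holomorphy.
\item The announced ``main step'' about two holomorphic maps at constant hyperbolic distance plays no role; neither do the smooth choice of $(p,q)$ or the reduction to $n=1$. The Taylor-coefficient argument works verbatim on any connected open subset of $\C^n$.
\end{itemize}
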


\begin{proof}
For any $z\in V$ put $s(z)=\pr_2(G^{-1}(z,0))$; since $G$ is holomorphic,
the mapping $z\mapsto s(z)$ is a holomorphic mapping from $V$
to~$D$. Hence, $V$ can be covered by open subspaces~$V_\alpha$ such
that, for each~$\alpha$,
\begin{equation}\label{eq:lemma}
  G|_{V_\alpha\times D}\colon (z,w)\mapsto e^{i\theta(z)}
  \frac{w-s(z)}{1-\overline{s(z)}w},  
\end{equation}
where $\theta(z)\in\R$; without loss of generality we may and will
assume that $V=V_\alpha$
for some~$\alpha$ and that
$V$ is open and connected. If $z\in V\subset\C^n$, put $z=(z_1,\dots,z_n)$.

Since the right-hand side of~\eqref{eq:lemma} is holomorphic as a
function of~$z$, on has, for any fixed~$w\in D$ and for any~$j$, $1\le
j\le n$,
\begin{multline*}
  0\equiv\frac\partial{\partial \bar z_j}
  \left(e^{i\theta(z)}\frac{w-s(z)}{1-\overline{s(z)}w}\right)\\
{}=e^{i\theta(z)}\left(
\frac{w-s(z)}{1-\overline{s(z)}w}\cdot
i\frac{\partial\theta(z)}{\partial \bar z_j}+
\frac{w-s(z)}{(1-\overline{ s(z)}w)^2}\cdot
\frac{\partial\overline{s(z)}}{\partial \bar z_j}\right)
\end{multline*}
for any $w\in D$ and any $j$, whence
\begin{equation*}
  i(1-\overline{s(z)}w)\frac{\partial\theta(z)}{\partial \bar z_j}
  + \overline{\left(\frac{\partial s}{\partial z_j}\right)}
  =0\quad\text{for
any $w\in D$ and any $j$,}  
\end{equation*}
or, equivalently,
\begin{align}
  i\frac{\partial\theta(z)}{\partial \bar
    z_j}+\overline{\left(\frac{\partial s}{\partial z_j}\right)}
  &=0,\label{eqa}\\ 
\overline{s(z)}\cdot \frac{\partial\theta(z)}{\partial \bar
    z_j}&=0\label{eqb}  
\end{align}
for any~$j$, $1\le j\le n$.

If the function~$s$ is not identically zero, then it follows
from~\eqref{eqb} that $\theta$ is holomorphic; being real valued, it
must be constant. Now~\eqref{eqa} implies that $s$ is constant, too,
and we are done.

If the function~$s$ is identically zero, then \eqref{eqa} implies that
the real valued function~$\theta$ is holomorphic, hence 
constant, and we are done.
\end{proof}

\begin{corollary}\label{corollary}
Suppose that $X$ is a connected complex manifold and~$p\colon Y\to
X$ is a locally trivial holomorphic bundle with fiber~$D$. Then this
bundle is flat.
\end{corollary}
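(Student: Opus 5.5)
The plan is to reduce the corollary directly to Lemma~\ref{lemma} by looking at transition functions. Since $p\colon Y\to X$ is holomorphically locally trivial with fiber~$D$, we can cover $X$ by open sets $V_\alpha$ with biholomorphisms $f_\alpha\colon p^{-1}(V_\alpha)\to V_\alpha\times D$ commuting with the projections. Refining the cover if necessary, we may assume that each $V_\alpha$ is a coordinate chart, i.e., biholomorphic to an open subset of~$\C^n$. On an overlap $V_\alpha\cap V_\beta$ the composition $f_\alpha\circ f_\beta^{-1}$ is a biholomorphism of $(V_\alpha\cap V_\beta)\times D$ onto itself of the form $(z,w)\mapsto(z,g_{\alpha\beta}(z)w)$, where $g_{\alpha\beta}(z)\in\Aut(D)$ because for each fixed~$z$ the map $w\mapsto \pr_2 f_\alpha f_\beta^{-1}(z,w)$ is a biholomorphism of~$D$. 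Thus $Y$ is a fiber bundle with structure group $\Aut(D)=\PSL_2(\R)$ and transition functions $g_{\alpha\beta}$.

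Next, we apply Lemma~\ref{lemma} to each overlap. Via the chart, $V_\alpha\cap V_\beta$ is identified with an open subset of~$\C^n$, and the map $G=f_\alpha\circ f_\beta^{-1}$ is exactly of the form~\eqref{eq:G}. The lemma then says that $g_{\alpha\beta}$ is locally constant. By the definition of flatness recalled in the notation section, this means the bundle is flat as an $\Aut(D)$-bundle.

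The only point requiring care is that Lemma~\ref{lemma} is stated for $V\subset\C^n$, whereas the overlaps live in~$X$. This is handled by choosing the $V_\alpha$ inside coordinate charts: the overlap then inherits coordinates, and local constancy is a local property invariant under biholomorphic changes of coordinates. No continuity assumption on $g_{\alpha\beta}$ is needed beyond what the lemma uses, since $G$ itself is holomorphic. There is no real obstacle; the corollary is essentially a restatement of the lemma in global language. If one wants the bundle to be flat as an $\SL_2(\R)$-bundle, for use with Lemma~\ref{ghnm}, that is a separate lifting question and is not part of this statement.
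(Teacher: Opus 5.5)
Your proposal is correct and is essentially the paper's own one-line argument: the maps $f_\alpha\circ f_\beta^{-1}$ have the form~\eqref{eq:G}, so Lemma~\ref{lemma} makes the transition functions $g_{\alpha\beta}$ locally constant, which is the definition of flatness. Your step of shrinking the $V_\alpha$ to coordinate charts, so that the lemma (stated for open subsets of $\C^n$) applies on the overlaps, spells out what the paper leaves implicit.
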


\begin{proof}
It follows from Lemma~\ref{lemma} that the transition functions of $p$
are locally constant, whence the result.  
\end{proof}

\begin{remark}\label{remark}
This corollary implies that if $B=\{(z,w)\in\C^2\colon
|z|^2+|w|^2<1\}$, $D=\{z\in\C\colon |z|<1\}$, and $p\colon
(z,w)\mapsto z$, then the holomorphic submersion~$p\colon B\to D$ is
not holomorphically locally trivial.  By contradiction, if $p$ were
locally trivial, then, since $D$ is simply connected, it would be
globally trivial as well, whence $B\cong D\times D$, a contradiction.
\end{remark}

\begin{proof}[Proof of Proposition~\ref{prop:simply-connected}]
Arguing by contradiction, suppose that $p|_U\colon U\to X$ is holomorphically
locally trivial. Then, by virtue of Corollary~\ref{corollary},
the bundle~$p|_U$ is induced
by a homomorphism $\pi_1(X)\to\PSL_2(\R)$.
Since $X$ is simply connected, the bundle $U\to X$ is
trivial, that is, that there exists a biholomorphism $F\colon U\to X\times
D$ such that $\pr_X\circ F=p$. Hence, if $T_{U|X}$ is the relative
tangent bundle of the bundle~$p$, one has
\begin{equation*}
T_{U|X}=F^*T_{X\times D|X}=F^*\pr_D^*T_D,  
\end{equation*}
so the bundle $T_{U|X}$ is trivial. It follows that if $Z\subset
U\subset \Tot_L$ is the zero section, then $T_{U|X}|_Z$ is the trivial
bundle as well.

On the other hand, it is clear that, under the obvious identification
of~$X$ and~$Z$, $T_{U|X}|_Z\cong T_{\Tot_L|X}|_Z\cong L$.
Since $L$ is not trivial, we arrived at a
contradiction. 
\end{proof}

\begin{proof}[Proof of Proposition~\ref{Milnor_style}]
Arguing by contradiction, if $p\colon U\to X$ is holomorphically
locally trivial, then, by virtue of Corollary~\ref{corollary}, $p$
is a flat bundle with $\PSL_2(\R)$ as structure group.

Now let $f\colon \tilde X\to X$ be an (unramified) covering of
degree~$2$; then $\tilde X$ is a compact Riemann surface of
genus~$2g-1$. If one puts $\tilde L=f^*L$ and $\tilde
U=(\Tot_f)^{-1}(U)$, where $\Tot_f\colon \Tot_{\tilde L}\to\Tot_L$ is
induced by~$f$, 
then $\tilde L$ is a line bundle on~$\tilde X$ of degree $2\deg L$,
$\tilde U\subset\Tot_{\tilde L}$ is a neighborhood of the zero
section of~$\tilde L$, and $\tilde U\to\tilde X$ is a flat
$\PSL_2(\R)$-bundle. Let~$\xi$
be the corresponding principal $\PSL_2(\R)$-bundle.

Observe that the structure group of the latter bundle can be reduced
(lifted) to $\SL_2(\R)$. Indeed, if one is given a
$\PSL_2(\R)$-bundle~$E$ over a topological space~$X$ and if $\xi\in
\check H^1(X,\PSL_2(\R))$ (non-commutative \v{C}ech cohomology) is the
class of a cocycle defining this bundle, then the only obstruction to
the structure group being liftable to $\SL_2(\R)$ is the class
$\delta(\xi)\in H^2(X,\Z/2)$, where the mapping $\delta$ is induced by
the exact sequence
\begin{equation*}
0\to\Z/2\Z\to\SL_2(\R)\to\PSL_2(\R)\to 1;  
\end{equation*}
if $f\colon \tilde X\to X$ is a continuous mapping, then
$\delta(f^*\xi)=f^*(\delta(\xi))$ 
(see for example \cite[Section 1]{NWW}).
Since $\deg f=2$ in our case, one has
$\delta(f^*\xi)=f^*\delta(\xi)=0$, so $\tilde U$ can be regarded as a
flat $\SL_2(\R)$-bundle. According to Proposition~\ref{ghnm}, the
Euler class of this bundle equals $s^*c_1(T_{\tilde U|\tilde X})$, where
$s\colon \tilde X\to \tilde U\subset\Tot_{\tilde L}$ is the zero
section. It is clear that 
\begin{equation}\label{eq:rel.tan}
\tilde s^*  T_{\tilde U|\tilde X}\cong \tilde s^*  T_{\Tot_{\tilde
    L}|\tilde X}\cong \tilde L,
\end{equation}
so the Euler class of our $\SL_2(\R)$-bundle in~$\tilde X$
equals~$c_1(\tilde L)$ and its degree (value on the fundamental class
in $H_2(\tilde X,\Z)$, speaking formally) equals $2\deg L$. Now using
Milnor's Theorem~1 from the paper~\cite{Milnor} one concludes that
this is impossible if
\begin{equation*}
2|\deg L|=|\deg \tilde L|\ge g(\tilde X)=2g-1
\Leftrightarrow |\deg L|\ge g,
\end{equation*}
whence the result.
\end{proof}

It remains to prove Proposition~\ref{tori}. To that end, we need another
lemma.

\begin{lemma}\label{commutators}
Let $\pi\colon \uPSL\to\PSL_2(\R)$ be the universal covering
of~$\PSL_2(\R)$; suppose that $g_1,g_2\in\PSL_2(\R)$ and that
$\Gamma_1,\Gamma_2\in\uPSL$ are such that $\pi(\Gamma_i)=g_i$.  If
$g_1$ and $g_2$ commute, then $\Gamma_1$ and $\Gamma_2$ also commute.
\end{lemma}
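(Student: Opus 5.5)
The plan is to use the classification of commuting pairs in $\PSL_2(\R)$ together with a continuity argument in $\uPSL$. Put $c=[\Gamma_1,\Gamma_2]=\Gamma_1\Gamma_2\Gamma_1^{-1}\Gamma_2^{-1}$. Since $g_1g_2=g_2g_1$, we have $\pi(c)=1$, so $c$ lies in $\Ker\pi\cong\Z$, which is central in $\uPSL$. The commutator depends only on $g_1,g_2$: replacing $\Gamma_i$ by $\Gamma_i z$ with $z$ central does not change it. So the task is to show that the central element $c(g_1,g_2)$ is trivial.

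First I would use the structure of commuting pairs in $\PSL_2(\R)$. If one of the $g_i$ is the identity, the claim is trivial, because then $\Gamma_i$ is central. Otherwise $g_1$ and $g_2$ commute only if they lie in a common one-parameter subgroup: the elliptic subgroup fixing a point of $H$, the parabolic subgroup fixing one point of $\R\cup\{\infty\}$, or the hyperbolic subgroup fixing two such points. The centralizer of a nontrivial element of $\PSL_2(\R)$ is exactly such a subgroup $A\cong\R$ or $\mathrm U(1)$, and in particular it is connected and abelian. I would check this directly in each conjugacy type; for a hyperbolic element, for instance, any commuting element must preserve its fixed pair, and an element swapping the two fixed points would conjugate the element to its inverse.

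Next, with $g_1\in A$ and $g_2\in A$, choose a path $g_2(t)$ in $A$ from $1$ to $g_2$, and lift it continuously to a path $\Gamma_2(t)$ in $\uPSL$ starting at $1$. The map $t\mapsto[\Gamma_1,\Gamma_2(t)]$ is continuous and takes values in the discrete set $\Ker\pi$, so it is constant. At $t=0$ its value is $1$, hence $[\Gamma_1,\Gamma_2(1)]=1$. The endpoint $\Gamma_2(1)$ differs from $\Gamma_2$ by a central element, so $[\Gamma_1,\Gamma_2]=1$. Here I am using $\uPSL$ with its topology, which is harmless: the paper forgets the topology only for convenience. If one prefers to stay within the model of Proposition~\ref{description}, the same argument can be phrased there, with the path $\gamma$ replaced by the orbit path $t\mapsto g_2(t)(1)$ on $\partial D$. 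Alternatively, one can avoid the path argument: the preimage $\pi^{-1}(A)$ is a connected one-dimensional Lie group when $A\cong\R$, or the extension of $\mathrm U(1)$ by a central $\Z$, which is isomorphic to $\R$, so it is abelian.

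The main obstacle I expect is the careful verification that centralizers of nontrivial elements are connected one-parameter subgroups, with no extra components. This is a feature of $\PSL_2(\R)$, as opposed to $\SL_2(\R)$ or $\mathrm{PGL}_2(\R)$. For elliptic elements of order $2$ it needs a moment's care: one must check that no element of $\PSL_2(\R)$ anticommutes in a way that would enlarge the centralizer.
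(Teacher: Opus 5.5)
Your proof is correct, but its key step differs from the paper's.

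Both arguments start from the same fact: the centralizer of a nontrivial element of $\PSL_2(\R)$ is a connected one-parameter subgroup. Equivalently, nontrivial commuting elements have the same type and the same fixed points. The paper gets this from Lehner's theorem, together with the observation that an element swapping $i$ and $-i$ cannot preserve $H$; that is exactly the order-$2$ elliptic check you flag.

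From there the paper does the following:
\begin{itemize}
\item it conjugates to three normal forms (dilations, translations, rotations);
\item it writes down explicit lifts $(g_j,[\gamma_j])$ in the path model of Proposition~\ref{description};
\item it verifies commutation directly from the multiplication formula~\eqref{eq:mult}.
\end{itemize}
You instead lift a path lying in the centralizer. You then use that $t\mapsto[\Gamma_1,\Gamma_2(t)]$ is a continuous map into the discrete group $\Ker\pi$, hence constant.

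Your route is uniform: it needs no normal forms, no case-by-case computation, and no explicit model of $\uPSL$. It also proves something more general. In any connected Lie group, lifts of $g_1,g_2$ to the universal cover commute whenever $g_2$ lies in the identity component of the centralizer of $g_1$. So the only input specific to $\PSL_2(\R)$ is that its centralizers are connected. The paper's route is fully explicit and stays inside the abstract-group description of $\uPSL$ developed in the preceding section.

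One small slip in your alternative aside. When $A\cong\R$, the preimage $\pi^{-1}(A)$ is not connected; it is isomorphic to $\R\times\Z$. It is still abelian, because it is generated by its identity component (a lift of $A$, a connected one-dimensional Lie group) together with the central subgroup $\Ker\pi$. Your main path-lifting argument is unaffected.
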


\begin{proof}
We may and will assume that neither $g_1$ nor~$g_2$ is identity.
Moreover, since $\Ker p$ lies in the center of $\uPSL$, it suffices,
for given $g_1$ and $g_2$, to check the assertion for just one pair
$\Gamma_1,\Gamma_2$.
  
Let us regard $g_1$ and $g_2$ as M\"obius transformations mapping $H$
into itself. We claim that $g_1$ and $g_2$ are either both hyperbolic,
or both elliptic, or both parabolic, and their sets of fixed points
are the same.

Indeed, Theorem~2 from~\cite[Chapter~I, 9F]{Lehner} asserts that if
$g_1$ and~$g_2$ are arbitrary commuting non-identity M\"obius
transformations, then either their sets of fixed points coincide or
both $g_1$ and~$g_2$ are elliptic M\"obius transformations of
order~$2$.  In the former case our assertion follows immediately. In
the latter case one may assume without loss of generality that $g_1$
is of the form~$z\mapsto-1/z$. The fixed points of~$g_1$ are~$i$
and~$-i$, and $g_2$, which commutes with~$g_1$, must either preserve
them separately or interchange them. In the first case $g_2=g_1$
or~$g_2=\mathrm{id}$, in the second case $g_2(H)\not\subset H$, a
contradiction.

Identifying $\PSL_2(\R)$ with a $\Aut(H)$ or $\Aut(D)$ and
conjugatinng by an appropriate element of $\uPSL$, we see in view of the
above that it suffices to consider the following three cases.

\begin{enumerate}
\item\label{i:h} The hyperbolic case: $\PSL_2(\R)$ is identified with
  $\Aut(H)$, $g_1\colon z\mapsto \lambda z$, $g_2\colon z\mapsto \mu
  z$, $\lambda, \mu\in\R_+$.

\item\label{i:p} The parabolic case: $\PSL_2(\R)$ is identified with
  $\Aut(H)$, $g_1\colon z\mapsto z+a$, $g_2\colon z\mapsto z+b$,
  $a,b\in\R$.

\item\label{i:e} The elliptic case: $\PSL_2(\R)$ is identified with
  $\Aut(D)$, $g_1\colon z\mapsto e^{i\ph} z$, $g_2\colon z\mapsto
  e^{i\theta} z$, $0\le \ph,\theta<2\pi$.
\end{enumerate}
Now we use the description of $\uPSL$ from
Proposition~\ref{description}. In case~\eqref{i:h}, one may put
$\Gamma_1=(g_1,[\gamma])$, $\Gamma_2=(g_2,[\gamma])$, where $\gamma\colon
[0;1]\to\R\cup\{\infty\}$, $\gamma(t)\equiv0$;
in case~\eqref{i:p},
put $\Gamma_1=(g_1,[\gamma_1])$, $\Gamma_2=(g_2,[\gamma_2])$, where
$\gamma_1(t)=ta$, $\gamma_2(t)=tb$;
in case \eqref{i:e},
put $\Gamma_1=(g_1,[\gamma_1])$, $\Gamma_2=(g_2,[\gamma_2])$, where
$\gamma_1(t)=e^{it\ph}$, $\gamma_2(t)=e^{it\theta}$. 

Formula~\eqref{eq:mult} shows that in each of these cases $\Gamma_1$
and $\Gamma_2$ commute.
\end{proof}

\begin{proof}[Proof of Proposition~\ref{tori}]
We will give a proof for the case in which $X\cong\C^n/\Lambda$ is a
compact complex torus; the proof for $X\cong(\C^*)^n$ is
similar, with obvious alterations.

Arguing by contradiction, suppose that $p\colon U\to X$ is
holomorphically locally trivial. Then, by Corollary~\ref{corollary},
$p$ is a flat bundle with $\PSL_2(\R)$ as structure group. Let
$f\colon X\to X$ be the mapping $z\bmod\Lambda\mapsto 2z\bmod\Lambda$;
as in the proof of Proposition~\ref{Milnor_style}, we put $\tilde
L=f^*L$ and $\tilde U=\Tot_f^{-1}(U)$. Again, $\tilde U\to X$ is a
flat $\PSL_2(\R)$-bundle.

It is clear that the induced mapping $f^*\colon H^2(X,\Z)\to
H^2(X,\Z)$ is multiplication by~$4$ and the induced mapping
$f^*_2\colon H^2(X,\Z/2\Z)\to H^2(X,\Z/2\Z)$ is zero. The latter
implies, again as in the proof of Proposition~\ref{Milnor_style}, that
the structure group of the $\PSL_2(\R)$-bundle $\tilde U\to X$ can be
lifted to~$\SL_2(\R)$ and (using Proposition~\ref{ghnm} and the
isomorphisms~\eqref{eq:rel.tan} with $X$ substitued for~$\tilde X$)
that the Euler class of the resulting flat $\SL_2(\R)$-bundle equals
$c_1(\tilde L)=f^* c_1(L)$.

Observe now that, since $X$ is topologically a torus, the hypothesis
$c_\R(L)\ne 0$ implies $c_1(L)\ne 0$; hence, $c_1(f^*L)=4c_1(L)\ne 0$,
so the Euler class of the flat $\SL_2(\R)$-bundle $\tilde U$ on~$X$ is
not zero. This, however, contradicts the following proposition.
\end{proof}

\begin{proposition}\label{on_torus}
If $X$ is a real torus \textup(i.\,e., a finite product of
circles\textup) and $\xi$ is a flat $\SL_2(\R)$-bundle on~$X$, then the
Euler class of~$\xi$ is zero.
\end{proposition}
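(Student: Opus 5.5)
The plan is to use Lemma~\ref{commutators} to lift the holonomy representation to the universal cover $\uPSL$, and then to use the standard description of the Euler class of a flat bundle as an obstruction to lifting.

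Let $X=(S^1)^n$, so $\pi_1(X)=\Z^n$ is free abelian. The flat bundle $\xi$ is induced by a homomorphism $\rho\colon\Z^n\to\SL_2(\R)$. Write $\bar\rho\colon\Z^n\to\PSL_2(\R)$ for its composition with the projection. Let $e_1,\dots,e_n$ be the standard generators. The elements $g_i=\bar\rho(e_i)$ pairwise commute. Choose arbitrary lifts $\Gamma_i\in\uPSL$ of the $g_i$. By Lemma~\ref{commutators} the $\Gamma_i$ pairwise commute, so $e_i\mapsto\Gamma_i$ extends to a homomorphism $\tilde\rho\colon\Z^n\to\uPSL$ lifting $\bar\rho$. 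This is the whole point of commutativity: for a free abelian group the only relations are commutators, and these are preserved.

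Next I relate the Euler class to liftability. Since $\SL_2(\R)$ deformation retracts onto $\SO(2)\cong\mathrm U(1)$, a principal $\SL_2(\R)$-bundle is topologically the same as an oriented plane bundle, and its Euler class is the first Chern class of the associated $\mathrm U(1)$-bundle. For a flat bundle with holonomy $\rho$, I compute this class via the central extension
\[
0\to\Z\to\uPSL\to\SL_2(\R)\to1
\]
(here $\uPSL$ is also the universal cover of $\SL_2(\R)$, as noted in the paper). The Euler class of a flat $\SL_2(\R)$-bundle equals, up to sign, the pullback by $\rho$ of the class of this extension in $H^2(\SL_2(\R)^\delta,\Z)$. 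Concretely, choose set-theoretic lifts $\tilde\rho(\gamma)$; the cocycle $c(\gamma,\gamma')=\tilde\rho(\gamma)\tilde\rho(\gamma')\tilde\rho(\gamma\gamma')^{-1}\in\Z$ represents $e(\xi)$ in $H^2(\pi_1 X,\Z)$, which maps to $H^2(X,\Z)$. Since $X$ is a $K(\Z^n,1)$, this map is an isomorphism. To justify the identification, I would use the \v{C}ech description from the proof of Lemma~\ref{ghnm}: with locally constant transition functions, choose locally constant logarithms $t_{\alpha\beta}$ in $\uPSL$; their coboundary is exactly the $\Z$-valued obstruction to lifting the cocycle to $\uPSL$, and this is the Euler cocycle. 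Thus $e(\xi)=0$ whenever $\rho$ lifts to a homomorphism into $\uPSL$.

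The remaining point is that we lifted $\bar\rho$, not $\rho$. The kernel of $\uPSL\to\PSL_2(\R)$ is a central $\Z$ containing the kernel of $\uPSL\to\SL_2(\R)$ as the index-two subgroup. Hence $\tilde\rho$ covers some homomorphism $\rho'\colon\Z^n\to\SL_2(\R)$ with $\rho'=\rho\cdot\epsilon$, where $\epsilon\colon\Z^n\to\{\pm1\}$. To fix this, adjust each $\Gamma_i$ by the central element $\tilde{(-1)}$ where needed, so that $\Gamma_i$ maps exactly to $\rho(e_i)$. This preserves commutativity because the adjusting element is central. The resulting homomorphism lifts $\rho$ itself, so the obstruction vanishes and $e(\xi)=0$.

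The main obstacle is the identification in the second step: that the Euler class of a flat $\SL_2(\R)$-bundle is precisely the obstruction to lifting the holonomy to $\uPSL$, including matching the \v{C}ech cocycle $\{t_{\alpha\beta}+t_{\beta\gamma}-t_{\alpha\gamma}\}$ with the integer-valued lifting cocycle. I would prove it by taking the locally constant transition functions in $\SL_2(\R)$ and lifting them locally constantly to $\uPSL$. Conjugating into $\SO(2)$ by a continuous path, compatibly with the proof of Lemma~\ref{ghnm}, shows that the central $\Z$-valued defect agrees with the $\Z$-valued Euler cocycle.
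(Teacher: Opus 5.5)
Your proof is correct. It shares the paper's key input, Lemma~\ref{commutators}, but derives $e(\xi)=0$ from the commuting lifts in a different way.

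The paper quotes Milnor's Lemma~2, which evaluates $e$ on each basis cycle $v_i\wedge v_j$ of $H_2(X,\Z)$ as $-[\Gamma_i,\Gamma_j]$. Lemma~\ref{commutators} kills all these pairings, and $e=0$ because $H^2$ of a torus is torsion free, hence detected by these pairings.

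You instead use that $\Z^n$ is presented by the relations $[e_i,e_j]=1$. This lets you assemble the commuting lifts into a homomorphism $\tilde\rho\colon\Z^n\to\uPSL$ lifting the holonomy. You then use that the Euler class is the obstruction to such a lift.

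The step you flag as the main obstacle needs no \v{C}ech matching in the direction you actually use. $\xi$ is obtained from the flat principal $\uPSL$-bundle $\R^n\times_{\tilde\rho}\uPSL$ by extension of structure group along $\uPSL\to\SL_2(\R)$. Since $\uPSL\cong\R^3$ is contractible, that bundle is topologically trivial, and hence so is $\xi$.

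Your sign-correction step is also unnecessary. Lemma~\ref{commutators} holds for \emph{arbitrary} lifts, so you can take each $\Gamma_i$ to be a lift of $\rho(e_i)$ under $\uPSL\to\SL_2(\R)$ from the start.

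As for what each route buys: the paper's argument is a two-line appeal to Milnor, whose paper it already relies on for Proposition~\ref{Milnor_style}, and it is quantitative. Yours avoids Milnor's formula and the torsion-freeness of $H^2(X,\Z)$, and it proves more, namely topological triviality of $\xi$. Applied directly to $\bar\rho$, since $\uPSL$ is also the universal cover of $\PSL_2(\R)$, it shows that every flat $\PSL_2(\R)$-bundle on a torus is topologically trivial. Then $U\cong X\times D$ continuously and fiberwise holomorphically, so $L\cong T_{U|X}|_Z$ is topologically trivial. This would make the doubling map and the reduction to $\SL_2(\R)$ in the proof of Proposition~\ref{tori} unnecessary.
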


\begin{proof}
Suppose that $X=\R^n/\Lambda$, where $\Lambda$ is a lattice. If
$(v_1,\dots,v_n)$ is a basis of~$\Lambda$, then
$H_2(X,\Z)\cong\bigwedge^2\Lambda$ is a free $\Z$-module with
basis~$(v_i\wedge v_j)_{1\le i<j\le n}$. Denote the Euler class
of~$\xi$ by $e\in H^2(X,\Z)$.

According to \cite[Lemma 2]{Milnor}, the result
of the standard pairing $\langle e, v_i\wedge v_j\rangle\in\Z$ can
be computed as follows. If the flat
bundle $\xi$ is defined by a homomorphism $\ph\colon
\Lambda\to \SL_2(\R)$, put $g_j=\ph(v_j)\in\PSL_2(\R)$, and pick, for
each~$j$, an element $\Gamma_j\in\uPSL=\widetilde{\SL_2(\R)}$ (the
universal covering) that 
is mapped to~$g_j$. Identifying the center of
$\widetilde{\SL_2(\R)}$ with~$\Z$, one has now
\begin{equation}\label{eq:Euler}
\langle e, v_i\wedge v_j\rangle=-[\Gamma_i,\Gamma_j].  
\end{equation}

 Since the group $\Lambda$ is
commutative, $[g_i,g_j]=1$, and Lemma~\ref{commutators} (applied to
the images of $g_1$ and~$g_2$ in~$\uPSL$) implies that
$[\Gamma_i,\Gamma_j]=1$ as well. Thus, \eqref{eq:Euler} implies that
$e=0$. We arrived at a contradiction, which completes the proof of
Propositions~\ref{on_torus} and~\ref{tori}. 
\end{proof}

\appendix

\section{Some remarks on flat line bundles}

The content of the appendix belongs to folklore and does not claim
novelty. Proofs will be omitted.

Throughout the section, we will be working with (topological) complex
line bundles on a fixed topological space~$X$, which is assumed to be
Hausdorff, path connected, locally simply connected, and paracompact
(any connected manifold has these properties, of course).

A complex line bundle on~$X$ is flat if and only if it admits a
triviaization with locally constant transition functions. We identify
the groups $\R/\Z$ and $\mathrm U(1)$ via the isomorphism $t\bmod
\Z\mapsto e^{2\pi it}$.

\begin{proposition}
Any flat line bundle on~$X$
is isomorphic to a line bundle that admits a trivialization
over an open cover of~$X$ such that the transition functions are
locally constant mappings to $\mathrm U(1)\subset\C^*$.
\end{proposition}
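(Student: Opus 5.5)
The plan is to start from the definition of flatness used in the appendix and then use the polar decomposition $\C^*\cong\mathrm U(1)\times\R_{>0}$ to move the transition functions into $\mathrm U(1)$.

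Since $X$ is Hausdorff, path connected and locally simply connected, the flat line bundle $L$ is induced by a homomorphism $\rho\colon\pi_1(X)\to\C^*$ (see \cite[\S\,13]{Steenrod}). Write $\rho(\gamma)=|\rho(\gamma)|\cdot u(\gamma)$ with $u(\gamma)\in\mathrm U(1)$. Because $\C^*$ is abelian, the polar decomposition is a group isomorphism, so both $\gamma\mapsto|\rho(\gamma)|$ and $\gamma\mapsto u(\gamma)$ are homomorphisms. Therefore $L\cong L_u\otimes L_r$, where $L_u$ and $L_r$ are the flat bundles induced by $u$ and by $r=|\rho|$. It then suffices to show that $L_r$ is topologically trivial, because $L_u$ already has locally constant $\mathrm U(1)$-valued transition functions.

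To see that $L_r$ is trivial, write $L_r$ via a cover $\{V_\alpha\}$ with locally constant transition functions $r_{\alpha\beta}\colon V_\alpha\cap V_\beta\to\R_{>0}$. Put $a_{\alpha\beta}=\log r_{\alpha\beta}$; this is a cocycle with values in the sheaf of continuous real functions, which is fine because $X$ is paracompact. Take a partition of unity $\{\phi_\gamma\}$ subordinate to the cover and set
\begin{equation*}
c_\alpha=\sum_\gamma\phi_\gamma a_{\gamma\alpha}.
\end{equation*}
The cocycle identity gives $a_{\alpha\beta}=c_\beta-c_\alpha$. Hence $r_{\alpha\beta}=e^{c_\beta}/e^{c_\alpha}$, so rescaling the trivialization over $V_\alpha$ by the factor $e^{c_\alpha}$ turns all transition functions into $1$. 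The cover I work with here is the one defining $L_r$; if needed, refine it so that it also trivializes $L_u$.

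Finally, combine the two pieces. Apply the same rescaling by the continuous positive functions $e^{c_\alpha}$ directly to the trivializations of $L$ over a common refinement. The transition functions of $L$ are $g_{\alpha\beta}=r_{\alpha\beta}u_{\alpha\beta}$, and after rescaling they become $u_{\alpha\beta}$, which are locally constant and take values in $\mathrm U(1)$. This gives the required isomorphic bundle; in fact it is the same bundle with modified trivializations.

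The only delicate point is the first step: the identification of flat bundles with homomorphisms of $\pi_1$, and the fact that the modulus part splits off as a separate flat bundle. This can be avoided entirely by working with cocycles throughout. Given locally constant $g_{\alpha\beta}$, the functions $|g_{\alpha\beta}|$ and $g_{\alpha\beta}/|g_{\alpha\beta}|$ are again locally constant cocycles, because multiplication in $\C^*$ is commutative. I will use this cocycle version, so the whole proof reduces to the partition-of-unity computation above.
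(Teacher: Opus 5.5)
Your proposal is correct and follows essentially the paper's route: the paper's sketch likewise splits the transition functions into a locally constant $\mathrm U(1)$-valued part and a positive part, i.e.\ writes $L\cong L_1\otimes L_2$, and asserts that the positive factor $L_2$ is trivial. Your partition-of-unity computation on $\log r_{\alpha\beta}$ supplies the detail the paper leaves as ``clear'', and rescaling the trivializations of $L$ directly (instead of forming the tensor product) is a harmless streamlining of the same idea.
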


\begin{proof}[Sketch of proof]
Any such bundle is isomorphic to~$L_1\otimes L_2$, where the
transition functions of the line bundle~$L_1$ are locally constant of
modulus~$1$ and these of~$L_2$ are poisitive; it is clear that the
bundle~$L_2$ is trivial.
\end{proof}

\begin{corollary}\label{app:cor}
The following abelian groups are isomorphic.
  \begin{enumerate}
  \item
The group of flat complex line bundles.    
\item
  $\Hom(\pi_1(X),\mathrm U(1))$.
\item
  $\Hom(H_1(X,\Z),\mathrm U(1))$.
\item
$H^1(X,\mathrm U(1))$.  
  \end{enumerate}
\end{corollary}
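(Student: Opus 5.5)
The statement to prove is Corollary~\ref{app:cor}. I would set up a chain of natural isomorphisms (1)$\to$(4)$\to$(2)$\to$(3), using the preceding proposition together with the standing hypotheses on~$X$ (Hausdorff, path connected, locally simply connected, paracompact).

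\emph{Flat bundles and $H^1(X,\mathrm U(1))$.} By the preceding proposition, every flat line bundle is isomorphic to one whose transition functions are locally constant maps to $\mathrm U(1)$. Here $H^1(X,\mathrm U(1))$ means \v{C}ech cohomology with coefficients in the constant sheaf $\mathrm U(1)$ with the discrete topology. A cocycle $\{g_{\alpha\beta}\}$ of locally constant $\mathrm U(1)$-valued functions therefore gives a class in this group.

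To see that the class is well defined on isomorphism classes, suppose two such cocycles define isomorphic flat line bundles. Then they differ by a coboundary of continuous $\C^*$-valued functions. The step I expect to be the main obstacle is showing that the coboundary can be replaced by one of \emph{locally constant} $\mathrm U(1)$-valued functions. An isomorphism of flat bundles need not be flat: topologically trivial but nontrivially flat bundles exist. So the statement must be read with isomorphisms of \emph{flat} bundles, i.e.\ compatible with the flat structures. Equivalently, "the group of flat complex line bundles" means line bundles with a flat structure up to flat isomorphism.

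With that reading, one repeats the $L_1\otimes L_2$ trick on the isomorphism cocycle $h_\alpha$. Write $h_\alpha = |h_\alpha|\cdot (h_\alpha/|h_\alpha|)$. Locally constant transition functions force the moduli $|h_\alpha|$ to glue, up to locally constant positive factors. These positive factors can be absorbed into the positive flat bundle $L_2$, which is trivial. Hence one can reduce to locally constant unimodular $h_\alpha$. Tensor product of bundles corresponds to the product of cocycles, so the map is a group homomorphism. It is onto by construction, and injective by the above.

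\emph{$H^1(X,\mathrm U(1))\cong\Hom(\pi_1(X),\mathrm U(1))$.} Since $X$ is path connected and locally simply connected, \v{C}ech $H^1$ with coefficients in a discrete abelian group $A$ classifies principal $A$-bundles, which are coverings with fiber~$A$. As recalled from~\cite[\S\,13]{Steenrod}, these correspond to homomorphisms $\pi_1(X)\to A$. Conjugacy is trivial because $A$ is abelian. Explicitly, the homomorphism sends a loop to the product of the transition constants along a chain of cover elements following the loop.

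\emph{$\Hom(\pi_1(X),\mathrm U(1))\cong\Hom(H_1(X,\Z),\mathrm U(1))$.} By Hurewicz, $H_1(X,\Z)=\pi_1(X)^{\mathrm{ab}}$. Every homomorphism to the abelian group $\mathrm U(1)$ factors uniquely through the abelianization.

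Finally, all the maps are compatible with the group structures (tensor product, pointwise multiplication), which gives the isomorphisms of abelian groups.
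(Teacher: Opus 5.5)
\textbf{There is a genuine gap in your step (1)$\cong$(4).} Your steps (4)$\cong$(2)$\cong$(3) are fine, and you are right that (1) cannot mean topological isomorphism classes. But your repair of (1)$\cong$(4) fails.

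\emph{The flat reading.} The preceding proposition gives $L\cong L_1\otimes L_2$ with $L_2$ trivial only \emph{as a topological bundle}. A cocycle of locally constant positive functions is a flat bundle with monodromy $\pi_1(X)\to\R_{>0}$. It is flatly trivial only when that homomorphism is trivial. So once you insist on flat isomorphisms, two of your sentences are false: ``every flat bundle is isomorphic to one with $\mathrm U(1)$-valued transition functions'' and ``these positive factors can be absorbed into $L_2$, which is trivial''. Flat line bundles up to flat isomorphism form $H^1(X,\C^*)=\Hom(\pi_1(X),\C^*)\cong\Hom(\pi_1(X),\mathrm U(1))\times\Hom(\pi_1(X),\R_{>0})$, with discrete coefficients. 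For example, on $X=\C^*$ the flat bundle with monodromy $2$ is not flatly isomorphic to any unitary one.

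What your $|h_\alpha|$ argument actually proves is the injectivity of the natural map from $H^1(X,\mathrm U(1))$ to flat bundles modulo flat isomorphism. Indeed, if both cocycles are unitary, then $|h_\alpha|=|h_\beta|$ on overlaps, so the $|h_\alpha|$ glue to a positive constant $c$, and $h_\alpha/c$ is a unitary flat isomorphism. That is not the injectivity of your map (1)$\to$(4). That injectivity would require exactly the false surjectivity of the natural map.

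\emph{The topological reading.} Here the situation is reversed. The natural map is onto by the preceding proposition, but it is $\delta_1$ (Proposition~\ref{c_1(/Ll)}), and its kernel is the image of $H^1(X,\R)$. So (1) becomes the quotient of $H^1(X,\mathrm U(1))$ by that image, i.e.\ $\Ext^1(H_1(X,\Z),\Z)$, consistent with Proposition~\ref{4equiv}. For $X=\C^*$ this quotient is $0$, while (2)--(4) are $\mathrm U(1)$.

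So with flat structures allowed to be $\C^*$-valued, your argument fails under either notion of isomorphism. The statement is correct only if (1) means line bundles \emph{equipped with} locally constant $\mathrm U(1)$-valued transition functions, up to isomorphisms that are locally constant in these trivializations. Then (1)$=$(4) holds by the definition of \v{C}ech $H^1$ with discrete coefficients. Your normalization by $|h_\alpha|$ shows that also allowing $\C^*$-valued locally constant isomorphisms changes nothing. The $L_1\otimes L_2$ proposition is then not needed at all.

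The paper gives no proof (the appendix is declared folklore) and presents the statement as a corollary of that proposition, i.e.\ along your route. That presentation has the same mix-up between flat and topological isomorphism. The fix is to state the unitary reading of (1) explicitly, not to invoke $L_1\otimes L_2$.
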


Let $\delta_1\colon H^1(X,\R/\Z)\to H^2(X,\Z)$ be
the homomorphism from the long exact sequence induced by the exact
sequence
\begin{equation}\label{Z,R,R/Z}
0\to \Z\to\R\to\R/\Z\to 0  
\end{equation}
of abelian groups, and let
\begin{equation*}
\delta_2\colon \Hom(H_1(X,\Z),\R/\Z)\to \Ext^1(H_1(X,\Z),\Z)
\end{equation*}
be the homomorphism from the long exact sequence corresponding to $\Hom$
from $H_1(X,\Z)$ to the exact sequence~\eqref{Z,R,R/Z}.

Recall the exact sequence 
\begin{equation}\label{eq:uc}
0\to \Ext^1(H^1(X,\Z),\Z)\xrightarrow j H^2(X,\Z)\to \Hom(H_2(X,\Z),\Z)\to0,  
\end{equation}
the existence of which is asserted by the universal coefficients theorem.

\begin{proposition}\label{prop:CD}
  The following diagram is commutative:
\begin{equation}\label{CD}
    \xymatrix{
      {H^1(X,\R/\Z)}\ar[r]^{\delta_1}\ar@{=}[d]&{H^2(X,\Z)}\\
      {\Hom(H_1(X,\Z),\R/\Z)}\ar[r]^{\delta_2}&
    {\Ext^1(H^1(X,\Z),\Z)}\ar@{^{(}->}[u]_{j}  
    }      
\end{equation}
where $j$ is the injective
homomorphism from~\eqref{eq:uc}.
\end{proposition}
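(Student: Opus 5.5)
The plan is to compute both compositions at the level of singular cochains and see that they produce the same cocycle. Two remarks first. The lower right corner of~\eqref{CD} should be read as $\Ext^1(H_1(X,\Z),\Z)$, which is the group appearing in~\eqref{eq:uc} and the target of~$\delta_2$. The left vertical equality is the universal coefficient isomorphism $H^1(X,\R/\Z)\cong\Hom(H_1(X,\Z),\R/\Z)$; it is an isomorphism because $\Ext^1(H_0(X,\Z),\R/\Z)=0$, as $H_0$ is free.

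Let $C_\bullet=C_\bullet(X)$ be the singular chain complex, with cycles $Z_k$ and boundaries $B_k$. Since $C_{k-1}$ is free, $B_{k-1}$ is free, so $C_k\cong Z_k\oplus B_{k-1}$.

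\emph{Step 1: the path through $\delta_1$.} Take $\varphi\in\Hom(H_1(X,\Z),\R/\Z)$. Compose it with $Z_1\to H_1$ and extend by zero on a complement of $Z_1$ in~$C_1$. This gives a cocycle $c\colon C_1\to\R/\Z$ representing~$\varphi$. Because $C_1$ is free, $c$ lifts to $\tilde c\colon C_1\to\R$. By the snake-lemma description of the Bockstein for the short exact sequence of cochain complexes $\Hom(C_\bullet,\Z)\to\Hom(C_\bullet,\R)\to\Hom(C_\bullet,\R/\Z)$, the class $\delta_1(\varphi)$ is represented by the integral $2$-cocycle $b=\tilde c\circ\partial_2$.

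\emph{Step 2: the path through $\delta_2$.} Compute $\Ext$ using the free resolution $0\to B_1\to Z_1\to H_1\to0$. Then $\Ext^1(H_1,\Z)=\mathrm{coker}\bigl(\Hom(Z_1,\Z)\to\Hom(B_1,\Z)\bigr)$. In the proof of the universal coefficient theorem, $j$ is the map sending the class of $\psi\colon B_1\to\Z$ to the class of the cocycle $\psi\circ\partial_2\colon C_2\to\Z$. This is indeed a cocycle, and it depends only on the class of $\psi$ modulo restrictions from $Z_1$, because such restrictions become coboundaries after extension to~$C_1$.

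Next apply $\Hom(-,\cdot)$ of this resolution to $0\to\Z\to\R\to\R/\Z\to0$ and use the snake lemma. Since $\R$ is injective, $\delta_2(\varphi)$ is computed as follows: lift $\varphi|_{Z_1}\colon Z_1\to\R/\Z$ to some map $Z_1\to\R$ and restrict it to~$B_1$. The restriction takes integer values, because $\varphi$ vanishes on~$B_1$. We may choose the lift to be $\tilde c|_{Z_1}$, so $\delta_2(\varphi)$ is the class of $\psi=\tilde c|_{B_1}$. Then $j(\delta_2(\varphi))$ is represented by $\psi\circ\partial_2=\tilde c\circ\partial_2=b$, which is the same cocycle as in Step~1.

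\emph{Main obstacle.} The only delicate point is bookkeeping rather than substance. One must check that the identification of $\Ext^1$ via this specific resolution is the one implicit in~\eqref{eq:uc}, and that the two connecting homomorphisms carry the same sign convention. A different convention would only change the diagram by a global sign $-1$, which does not affect its use in the appendix. I would fix the convention in which both connecting maps are defined as ``lift, apply the differential, pull back'', and verify this once.
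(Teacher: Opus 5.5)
Your proof is correct. The paper gives no argument to compare it with: the appendix calls its contents folklore and omits all proofs. Your chain-level computation is the standard verification.

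You are right that the corner must be $\Ext^1(H_1(X,\Z),\Z)$; the same misprint appears in~\eqref{eq:uc}. The ingredients you use are exactly what is needed:
\begin{itemize}
\item the splitting $C_1\cong Z_1\oplus B_0$;
\item the description $j[\psi]=[\psi\circ\partial_2]$, which comes from the proof of the universal coefficient theorem with the resolution $0\to B_1\to Z_1\to H_1(X,\Z)\to 0$;
\item a single real lift $\tilde c$ used for both connecting maps.
\end{itemize}
Take the conventions $\delta f=f\circ\partial$ and ``lift, differentiate, pull back'' for every connecting map, as in Hatcher. Then the square commutes on the nose, not merely up to sign, so the sign caveat in your last paragraph can be settled rather than left open.

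Two small remarks.

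First, in Step~2 the lift $Z_1\to\R$ exists because $Z_1$ is free, not because $\R$ is injective. Injectivity (divisibility) of $\R$ does matter elsewhere. It gives $\Ext^1(H_1(X,\Z),\R)=0$, hence surjectivity of $\delta_2$ and $\Im(j\circ\delta_2)=\Im(j)$. That is the form in which the proposition is used in Proposition~\ref{4equiv}.

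Second, that application needs even less. The equality $\Im\delta_1=\Im j$ follows without cochains from three facts:
\begin{itemize}
\item exactness of the Bockstein sequence $H^1(X,\R/\Z)\to H^2(X,\Z)\to H^2(X,\R)$;
\item the isomorphism $H^2(X,\R)\cong\Hom(H_2(X,\Z),\R)$;
\item exactness of~\eqref{eq:uc}.
\end{itemize}
Your argument proves the sharper statement that the two specific maps agree. Note also that it takes place in singular cohomology, which is the setting of~\eqref{eq:uc}. For manifolds this coincides with the \v{C}ech cohomology in which $c_1$ naturally lives, so nothing is lost.
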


\begin{proposition}\label{c_1(/Ll)}
If $\xi\in H^1(X,\R/\Z)=\Hom(\pi_1(X),\R/\Z)$, then
$\delta_1(\xi)=c_1(L_\xi)$, where $L_\xi$ is the flat line bundle
corresponding to the homomorphism~$\xi$.   
\end{proposition}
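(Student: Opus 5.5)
We must prove that if $\xi\in H^1(X,\R/\Z)=\Hom(\pi_1(X),\R/\Z)$, then $\delta_1(\xi)=c_1(L_\xi)$. The plan is to compute both sides with one \v{C}ech cocycle. Since $X$ is paracompact and locally simply connected, we can fix an open cover $\{V_\alpha\}$ by path connected, simply connected sets with path connected pairwise intersections; refining if necessary, we also assume it computes \v{C}ech cohomology with constant coefficients. In this cover the class $\xi$ is represented by a locally constant $\R/\Z$-valued \v{C}ech $1$-cocycle $\{t_{\alpha\beta}\}$. By the construction recalled in the introduction and in Corollary~\ref{app:cor}, the flat line bundle $L_\xi$ has transition functions $g_{\alpha\beta}=e^{2\pi i t_{\alpha\beta}}$, which are locally constant and take values in $\mathrm U(1)$. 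Here we use the identification $\R/\Z\cong\mathrm U(1)$ fixed in the appendix.

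Next we compute $\delta_1(\xi)$ by the usual snake-lemma recipe. Lift each $t_{\alpha\beta}$ to a locally constant real-valued function $\tilde t_{\alpha\beta}$; this is possible on each intersection because the intersections are path connected, or after a further refinement. Then $n_{\alpha\beta\gamma}=\tilde t_{\alpha\beta}+\tilde t_{\beta\gamma}-\tilde t_{\alpha\gamma}$ is an integer-valued $2$-cocycle, and it represents $\delta_1(\xi)$.

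The first Chern class of a line bundle with transition functions $g_{\alpha\beta}$ comes from the exponential sequence $0\to\Z\to\mathcal C\to\mathcal C^*\to 0$ of sheaves of continuous functions, via $\C\ni u\mapsto e^{2\pi i u}$. Concretely, $c_1(L_\xi)$ is the class of $\frac{1}{2\pi i}(\log g_{\alpha\beta}+\log g_{\beta\gamma}-\log g_{\alpha\gamma})$ for any choice of continuous logarithms. Taking $\log g_{\alpha\beta}=2\pi i\tilde t_{\alpha\beta}$, this is exactly the cocycle $n_{\alpha\beta\gamma}$. This is the same \v{C}ech cocycle argument used in Lemma~\ref{ghnm}.

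It remains to make the two connecting maps compatible. The inclusion of locally constant sheaves $\Z\to\R\to\R/\Z$ into $\Z\to\mathcal C_\C\to\mathcal C^*$, given by $t\mapsto t$ and $t\mapsto e^{2\pi i t}$, is a morphism of short exact sequences of sheaves. By naturality of connecting homomorphisms, $\delta_1$ composed with the identity on $H^2(X,\Z)$ agrees with the connecting map of the exponential sequence applied to the image of $\xi$, and that image is the class of $L_\xi$ in $H^1(X,\mathcal C^*)$. This gives the result, provided two standard identifications hold.

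The main obstacle is exactly these identifications. First, sheaf cohomology $H^*(X,\Z)$ must agree with the (singular) cohomology in which $c_1$ and $\delta_1$ are defined, which holds for paracompact, locally contractible spaces such as manifolds. Second, the convention that $c_1$ equals the exponential-sequence connecting map, and not its negative, must be fixed; with the sign $e^{2\pi i u}$ chosen above it matches the cocycle $n_{\alpha\beta\gamma}$, and it is consistent with the Euler class convention used in Lemma~\ref{ghnm}.
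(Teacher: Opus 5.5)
The paper gives no proof of this statement: the appendix calls its contents folklore and omits all proofs, so there is no argument of the author's to compare with. Your proof is the standard one and it is correct. The essential step is your fourth paragraph. The maps $t\mapsto t$ and $t\mapsto e^{2\pi i t}$ define a morphism from $0\to\underline{\Z}\to\underline{\R}\to\underline{\R/\Z}\to0$ to the continuous exponential sequence $0\to\underline{\Z}\to\mathcal C_\C\to\mathcal C^*\to0$. Naturality of connecting maps then gives $\delta^{\exp}(\phi_*\xi)=\delta_1(\xi)$, and $\phi_*\xi$ is the class of $L_\xi$ in $H^1(X,\mathcal C^*)$. This works directly in sheaf cohomology and needs no special cover. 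Your explicit cocycle $n_{\alpha\beta\gamma}$ is only a consistency check.

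There are two caveats, both about scope and normalization rather than the idea.

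First, the hypotheses you invoke hold for manifolds but not for every space allowed in the appendix. A cover by simply connected sets with path connected pairwise intersections that also computes \v{C}ech cohomology exists for manifolds (geodesically convex balls). It need not exist for an arbitrary Hausdorff, paracompact, locally simply connected space. Likewise, identifying sheaf and singular $H^2$ compatibly with the Bockstein needs something like local contractibility. As written, you have therefore proved the statement for manifolds, which is all the paper uses. Under the appendix's bare hypotheses, read $H^2$ as \v{C}ech cohomology, where $c_1$ defined by the exponential sequence lives anyway. There $\check H^1(X,\underline{\R/\Z})=\Hom(\pi_1(X),\R/\Z)$ still holds by covering space theory, so your naturality argument goes through unchanged.

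Second, the claim that $L_\xi$ has transition functions $e^{2\pi i t_{\alpha\beta}}$ is where a convention enters. If $L_\xi$ means $\tilde X\times_{\pi_1(X)}\C$, you must check that its holonomy along a loop $\gamma$ is $e^{2\pi i\xi(\gamma)}$. Here $\xi(\gamma)$ is computed from the \v{C}ech representative by summing the $t_{\alpha_i\alpha_{i+1}}$ along a chain of sets covering $\gamma$. With the opposite left/right convention you would get $\delta_1(\xi)=-c_1(L_\xi)$. The paper never specifies the isomorphisms in Corollary~\ref{app:cor}, so this is a matter of normalization. It is also harmless for Proposition~\ref{4equiv}, which uses only the image of $\delta_1$.
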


Propositions~\ref{prop:CD} and~\ref{c_1(/Ll)} imply the following.

\begin{proposition}\label{4equiv}
For a complex line bundle $L$ on~$X$, the following assertions
are equivalent.
\begin{enumerate}
\item $L$ is flat.
\item $c_1(L)\in\Im(j)\subset H^2(X,\Z)$, where $j$ is the injection
  from the universal coefficients exact sequence~\eqref{eq:uc}.
\item $\langle c_1(L),x\rangle=0$ for any $x\in H_2(X,\Z)$, where 
  \begin{equation*}
\langle \cdot,\cdot\rangle\colon H^2(X,\Z)\otimes H_2(X,\Z)\to\Z    
  \end{equation*}
  is the standard pairing.
\item $c_\R(L)=0$.  
\end{enumerate}
\end{proposition}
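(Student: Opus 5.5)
We prove the chain of implications (1)$\Rightarrow$(4)$\Rightarrow$(3)$\Rightarrow$(2)$\Rightarrow$(1), taking Corollary~\ref{app:cor}, Proposition~\ref{prop:CD} and Proposition~\ref{c_1(/Ll)} as given.

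\emph{(1)$\Rightarrow$(4).} By the proposition preceding Corollary~\ref{app:cor}, a flat line bundle $L$ is isomorphic to $L_\xi$ for some $\xi\in H^1(X,\R/\Z)=\Hom(\pi_1(X),\R/\Z)$. Proposition~\ref{c_1(/Ll)} then gives $c_1(L)=\delta_1(\xi)$. Since $\delta_1$ is a connecting homomorphism of the long exact sequence attached to~\eqref{Z,R,R/Z}, its image is the kernel of $H^2(X,\Z)\to H^2(X,\R)$. Hence $c_\R(L)=0$.

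\emph{(4)$\Rightarrow$(3).} The pairing $\langle c_1(L),x\rangle$ is compatible with extension of coefficients. Thus $\langle c_1(L),x\rangle\in\Z\subset\R$ equals $\langle c_\R(L),x\rangle$, where the latter pairing is taken with $H_2(X,\R)$ after mapping $x$ into real homology. This vanishes when $c_\R(L)=0$.

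\emph{(3)$\Rightarrow$(2).} This is immediate from the exactness of~\eqref{eq:uc}. The map $H^2(X,\Z)\to\Hom(H_2(X,\Z),\Z)$ is $c\mapsto\langle c,\cdot\rangle$, so its kernel is $\Im(j)$.

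\emph{(2)$\Rightarrow$(1).} This is the one step with real content. Suppose $c_1(L)=j(\eta)$ with $\eta\in\Ext^1(H_1(X,\Z),\Z)$; here I read the Ext term in~\eqref{eq:uc} as $\Ext^1(H_1(X,\Z),\Z)$, as the universal coefficient theorem requires. Apply $\Hom(H_1(X,\Z),-)$ to~\eqref{Z,R,R/Z}. The next term of the resulting long exact sequence is $\Ext^1(H_1(X,\Z),\R)=0$, because $\R$ is divisible and hence injective. So $\delta_2$ is surjective, and we may choose $\xi$ with $\delta_2(\xi)=\eta$.

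By the commutativity of~\eqref{CD} and Proposition~\ref{c_1(/Ll)},
\begin{equation*}
c_1(L_\xi)=\delta_1(\xi)=j\delta_2(\xi)=c_1(L).
\end{equation*}
Complex line bundles on a paracompact space are classified by $c_1$, since $\C P^\infty=K(\Z,2)$. Therefore $L\cong L_\xi$, and $L$ is flat.

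The main obstacle is this last step: one has to use the injectivity of $\R$ to get surjectivity of $\delta_2$, and one has to rely on the classification of line bundles by $c_1$ in the stated generality of $X$. Everything else is formal manipulation of exact sequences.
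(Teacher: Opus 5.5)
Your proof is correct and follows the route the paper indicates. The paper omits the argument and only says the statement follows from Propositions~\ref{prop:CD} and~\ref{c_1(/Ll)}; your chain (1)$\Rightarrow$(4)$\Rightarrow$(3)$\Rightarrow$(2)$\Rightarrow$(1) makes explicit the two ingredients that attribution leaves implicit, namely the surjectivity of $\delta_2$ via $\Ext^1(H_1(X,\Z),\R)=0$ and the classification of line bundles by $c_1$. Your reading of the Ext term as $\Ext^1(H_1(X,\Z),\Z)$ correctly fixes a typo in~\eqref{eq:uc}, and the classification caveat you raise is harmless for manifolds, where \v{C}ech and singular cohomology agree; manifolds are the only case the paper uses.
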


Since, for any finiutely generated abelian group~$A$, $\Ext^1(A,\Z)$
is a torsion group, Proposition~\ref{4equiv} implies the following
fact, which is just Lemma~2.6 from~\cite{Putman}.

\begin{corollary}
If $H_1(X,\Z)$ is finitely generated \textup(in particular, if $X$ is
a compact manifold\textup), then a line bundle $L$ on~$X$ is
flat if and only if
$c_1(L)$ is a torsion element of~$H^2(X,\Z)$.  
\end{corollary}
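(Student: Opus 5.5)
The plan is to deduce the corollary from Proposition~\ref{4equiv}, using only the universal coefficients sequence~\eqref{eq:uc} and elementary facts about finitely generated abelian groups. By Proposition~\ref{4equiv}, $L$ is flat if and only if $c_1(L)\in\Im(j)$. So it suffices to show that, when $H_1(X,\Z)$ is finitely generated, $\Im(j)$ is exactly the torsion subgroup of $H^2(X,\Z)$.

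First I would check that $\Im(j)$ is contained in the torsion. The $\Ext$ term in~\eqref{eq:uc} is, by the universal coefficient theorem, $\Ext^1(H_1(X,\Z),\Z)$, and $H_1(X,\Z)$ is finitely generated. Writing $H_1(X,\Z)\cong\Z^r\oplus T$ with $T$ finite, we get $\Ext^1(H_1(X,\Z),\Z)\cong\Ext^1(T,\Z)\cong\Hom(T,\Q/\Z)$, which is a finite group. Hence its image under $j$ consists of torsion elements.

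Conversely, I would show that every torsion element lies in $\Im(j)$. Take $c\in H^2(X,\Z)$ with $nc=0$. Its image in $\Hom(H_2(X,\Z),\Z)$ is then killed by $n$. But $\Hom(H_2(X,\Z),\Z)$ is torsion-free, since $\Z$ is torsion-free, so this image is $0$. By exactness of~\eqref{eq:uc}, $c\in\Im(j)$.

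This inclusion can also be seen through condition (3) of Proposition~\ref{4equiv}: $n\langle c,x\rangle=0$ in $\Z$ forces $\langle c,x\rangle=0$ for every $x\in H_2(X,\Z)$. Note that this direction does not need finite generation at all.

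Combining the two inclusions gives the equivalence. There is no real obstacle. The only point requiring care is that the $\Ext$ term is $\Ext^1(H_1,\Z)$, whose finiteness is exactly where finite generation of $H_1(X,\Z)$ enters. For the parenthetical remark, a compact manifold has the homotopy type of a finite CW complex, so its homology is finitely generated.
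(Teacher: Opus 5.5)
Your proposal is correct and follows essentially the paper's argument: reduce via Proposition~\ref{4equiv} to $c_1(L)\in\Im(j)$, then use that $\Ext^1(H_1(X,\Z),\Z)$ is torsion when $H_1(X,\Z)$ is finitely generated. You also make explicit the converse inclusion, which the paper leaves tacit: torsion classes vanish in the torsion-free group $\Hom(H_2(X,\Z),\Z)$ and hence lie in $\Im(j)$. You rightly read the $\Ext$ term of~\eqref{eq:uc} as $\Ext^1(H_1(X,\Z),\Z)$ rather than the misprinted $\Ext^1(H^1(X,\Z),\Z)$.
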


\begin{remark}
There exist pairs $(X,L)$, where $X$ is a non-complact complex
manifold and $L$ is a flat holomorphic line bundle in~$X$, such that
$c_1(L)$ is not a torsion element of~$H^2(X,\Z)$. For example, let the
group~$G$ be the free product of groups $\Z/n\Z$ for all integer~$n>
1$. There exists a connected open subset $X\subset\C^n$ such that
$\pi_1(X)\cong G$ (actually, one may take~$n=3$). Since
$\Hom(G,\mathrm U(1))\cong \prod_{n>1}\Z/n\Z$, there exists a
homomorhism $\xi\colon\pi_1(X)\to\mathrm U(1)$ such that $\xi^k\ne1$
for any~$k\ge2$. If $L$ is the flat holomorphic line bundle on~$X$
corresponding to the homomorphism~$\xi$, then Corollary~\ref{app:cor}
implies that $L$ is not a torsion element in the group of
(topological) complex line bundles on~$X$, or, equivalently, that
$c_1(L)\notin H^2(X,\Z)_{\mathrm{tors}}$.
\end{remark}

\bibliographystyle{amsplain}

\bibliography{s2}

\providecommand{\bysame}{\leavevmode\hbox to3em{\hrulefill}\thinspace}
\providecommand{\MR}{\relax\ifhmode\unskip\space\fi MR }
\providecommand{\MRhref}[2]{%
  \href{http://www.ams.org/mathscinet-getitem?mr=#1}{#2}
}
\providecommand{\href}[2]{#2}
\begin{thebibliography}{10}

\bibitem{AV}
A.~Andreotti and E.~Vesentini, \emph{On the pseudo-rigidity of {S}tein
  manifolds}, Ann. Scuola Norm. Sup. Pisa Cl. Sci. (3) \textbf{16} (1962),
  213--223. \MR{170353}

\bibitem{Chirka}
E.~M. Chirka, \emph{Holomorphic motions and the uniformization of holomorphic
  families of {R}iemann surfaces}, Uspekhi Mat. Nauk \textbf{67} (2012),
  no.~6(408), 125--202 (Russian), English translation: Russian Mathematical
  Surveys, 2012, vol.~67, No.~6, 1091--1165. \MR{3075079}

\bibitem{FischerGrauert}
Wolfgang Fischer and Hans Grauert, \emph{Lokal-triviale {F}amilien kompakter
  komplexer {M}annigfaltigkeiten}, Nachr. Akad. Wiss. G\"{o}ttingen Math.-Phys.
  Kl. II \textbf{1965} (1965), 89--94. \MR{184258}

\bibitem{Grauert62}
Hans Grauert, \emph{\"{U}ber {M}odifikationen und exzeptionelle analytische
  {M}engen}, Math. Ann. \textbf{146} (1962), 331--368. \MR{137127}

\bibitem{Lehner}
Joseph Lehner, \emph{Discontinuous groups and automorphic functions},
  Mathematical Surveys, No. VIII, American Mathematical Society, Providence,
  RI, 1964. \MR{164033}

\bibitem{Milnor}
John Milnor, \emph{On the existence of a connection with curvature zero},
  Comment. Math. Helv. \textbf{32} (1958), 215--223. \MR{95518}

\bibitem{Narasimhan}
Mudumbai~S. Narasimhan, \emph{Variations of complex structures on an open
  {R}iemann surface}, Ann. Inst. Fourier (Grenoble) \textbf{11} (1961),
  493--514, XVI--XVII. \MR{125960}

\bibitem{NWW}
Karl-Hermann Neeb, Friedrich Wagemann, and Christoph Wockel, \emph{Making
  lifting obstructions explicit}, Proc. Lond. Math. Soc. (3) \textbf{106}
  (2013), no.~3, 589--620. \MR{3048551}

\bibitem{Nishino}
Toshio Nishino, \emph{Nouvelles recherches sur les fonctions enti\`eres de
  plusieurs variables complexes. {II}. {F}onctions enti\`eres qui se
  r\'{e}duisent \`a celles d'une variable}, J. Math. Kyoto Univ. \textbf{9}
  (1969), 221--274. \MR{255842}

\bibitem{Ohsawa}
Takeo Ohsawa, \emph{{$L^2$} proof of {N}ishino's rigidity theorem}, Kyoto J.
  Math. \textbf{60} (2020), no.~3, 1047--1050. \MR{4134358}

\bibitem{Putman}
Andrew Putman, \emph{The {P}icard group of the moduli space of curves with
  level structures}, Duke Math. J. \textbf{161} (2012), no.~4, 623--674.
  \MR{2891531}

\bibitem{Rawnsley}
John Rawnsley, \emph{On the universal covering group of the real symplectic
  group}, J. Geom. Phys. \textbf{62} (2012), no.~10, 2044--2058. \MR{2944792}

\bibitem{Siu}
Yum~Tong Siu, \emph{Techniques of extension of analytic objects}, Lecture Notes
  in Pure and Applied Mathematics, Vol. 8, Marcel Dekker, Inc., New York, 1974.
  \MR{361154}

\bibitem{Steenrod}
Norman Steenrod, \emph{The {T}opology of {F}ibre {B}undles}, Princeton
  Mathematical Series, vol. 14, Princeton University Press, Princeton, NJ,
  1951. \MR{39258}

\end{thebibliography}

\end{document}